\newcommand{\R}{{\sf I\hspace{-.15em}R}}
\newcommand{\N}{{\sf I\hspace{-.15em}N}}
\newcommand{\p}{{\sf I\hspace{-.15em}P}}
\newcommand{\nc}{\newcommand}
\nc{\nv}{\textsl {v}}
\begin{document}

\title{Semicoercive  Variational Inequalities - From Existence to Numerical Solution of Nonmonotone Contact Problems\thanks{*} }
\titlerunning{Semicoercivity}
% Use \titlerunning{Short Title} for an abbreviated version of
% your contribution title if the original one is too long
\author{Nina Ovcharova \and Joachim Gwinner}
%Communicated by Roland Glowinski
% Use \authorrunning{Short Title} for an abbreviated version of
% your contribution title if the original one is too long
\date {\today}
\institute{N. Ovcharova
            \at Institute of Mathematics, Department of Aerospace Engineering, Universit\"{a}t der Bundeswehr M\"{u}nchen, Germany,  \email{nina.ovcharova@unibw.de} (\ding{41})
       \and J. Gwinner 
            \at Institute of Mathematics, Department of Aerospace Engineering, Universit\"{a}t der Bundeswehr M\"{u}nchen, Germany,  \email{joachim.gwinner@unibw.de}}
 
%
% Use the package "url.sty" to avoid
% problems with special characters
% used in your e-mail or web address
%

\maketitle

\begin{center}
{\it Dedicated to the memory of Professor V.F. Demyanov and 
%to the memory of 
Professor S. Schaible}  
\end{center}

\abstract{ In this paper we present a novel numerical solution procedure for semicoercive hemivariational inequalities. As a model example we consider a unilateral semicoercive contact problem with nonmonotone friction and provide numerical results
for benchmark tests.}

\keywords{Semicoercivity  \and Pseudomonotone bifunction \and  Hemivariational inequality \and Plus function \and Smoothing approximation  \and  Finite element discretization \and Unilateral contact}
\vspace{0.2cm}
\hspace{-0.6cm} {\bf AMS} 74G15, 74M15,  74S05
\section{Introduction}

This paper presents a contribution to constructive nonsmooth analysis
\cite{DemyanovRubinov}
in view of application to nonsmooth continuum mechanics
\cite{DemyanovStavroulakis}.
It is devoted to the numerical solution of nonmonotone semicoercive contact problems in solid mechanics modelled by hemivariational inequalities, a class of variational inequalities (VIs) introduced and studied by Panagiotopoulos 
\cite{Panagiotopoulos1993}, see also \cite{Naniewicz,Panagiotopoulos1998,Goeleven}.
Here we treat unilateral contact problems with nonmonotone friction
\cite{Baniotopoulos}, which occur with adhesion and delamination in the delicate situation, where the body is not fixed along some boundary part, but is only subjected to surface tractions and body forces. Thus there is a loss of coercivity leading to so-called semicoercive or noncoercive variational problems
 \cite{AttouchButtazzo,Penot}.

The existence theory of hemivariational inequalities (HVIs) and of more abstract 
topologically (in the sense of Br\'{e}zis \cite{Brezis}) pseudomonotone 
 VIs, %variational inequalities
also in the semicoercive case is well documented in the literature.
Without claim of completeness we can cite 
[11-28]  in chronological order 
%\cite{Hess, Gwinner_PhD,Pana91, GoelThera95, AdlyGoelThera96, DincaPana97 }.

While there are studies of numerical solution methods for {\it coercive} HVIs,
 see the book \cite{Haslinger} of Haslinger, Miettinen, and  Panagiotopoulos
and also the more recent paper \cite{Hinter}, and there are works on the numerical treatment of semicoercive {\it monotone} VIs, 
see the book of Kaplan and Tichatschke \cite{Kaplan} 
and the papers \cite{Gwi-91,Gwi-94,Spann,Dostal_98,AdlyGoel,Dostal_03,Namm},  
to the best of the authors' knowledge,  
efficient methods for the numerical solution of semicoercive HVIs supported by rigorous mathematical analysis are missing. It is the purpose of the present paper to initiate  
work in this direction. To this end we extend the  convergence result in \cite{Gwi_Ov2015} 
 for a general approximation scheme for the solution of pseudomonotone VIs
 to the semicoercive case. Then based on this result, we combine finite element discretization with regularization techniques of nondifferentiable optimization
\cite{Ovcharova,Ov_Gwin2014} to arrive at a novel solution procedure for semicoercive HVIs.

The paper is organized as follows. In the next section we revisit the existence theory for pseudomonotone VIs that is is necessary for the subsequent development. Here we exhibit also an interesting relation between a sign condition for the data of the pseudomonotone VI,
that is crucial for existence, and the more recent notion of well-positioned convex sets 
in the stability theory for semicoercive pseudomonotone VIs due to 
Adly, Th{\'e}ra, and Ernst \cite{Adly-02,Adly-04}. Then we adapt the general approximation scheme of Glowinski \cite{Glowinski} to pseudomonotone VIs and provide convergence results
thus extending \cite[Theorem 3.1]{Gwi_Ov2015}  to the semicoercive situation. 
Then we apply this convergence theorem to HVIs in linear elasticity, combine finite element discretization with regularization techniques of nondifferentiable optimization from  
\cite{Ovcharova,Ov_Gwin2014} to obtain a novel solution procedure for semicoercive HVIs.
To demonstrate its efficiency we finally provide numerical results for benchmark tests.

\label{sec:1}
%\section{Preliminaries}\label{sec:2}
\section{ Semicoercive pseudomonotone variational inequality}\label{sec:2} \label{section11}
In this section we revisit the existence theory for pseudomonotone VIs that is necessary for the subsequent treatment of HVIs. Here we follow 
\cite{Gwinner_PhD,Jeggle,Gwi-94,GoelGwin}
for the setting of pseudomonotone bifunctions and semicoercivity.
 Moreover, we also exhibit a relation to the more recent notion of well-positioned convex sets introduced and studied in \cite{Adly-02,Adly-04}. 
 
Let $V$ be a reflexive Banach space and $V^*$ its dual. We denote by $\langle \, \cdot \,,\cdot \,\rangle$  the duality pairing between $V$ and $V^*$, and by $\| \cdot\|$ and $\| \cdot\|_*$ the norm and the dual norm on $V$ and $V^*$, respectively. We are concerned with a linear semicoercive operator $A$ from $V$ to $V^*$, i.e. there exists some positive constant $c_0$ such that
$$
\langle Av, v\rangle \geq c_0 |v|^2 \quad \forall v \in V, 
$$ 
where $|\cdot|$ denotes a lower semicontinuous seminorm on $V$. Thus, the kernel $Y$ of $|\cdot|$, defined by 
$$
Y=\{ y \in V \,:|y|=0\, \}
$$
is a closed nontrivial subspace.

Further, let $g\in V^*$ be a continuous linear form, $K\subseteq V$ a nonvoid closed convex set such that $0\in K$, and consider
$\varphi : K\times K \to \R$ such that $\varphi (\cdot \,, \,\cdot)$ is pseudomonotone, $\varphi(\cdot, v)$ is upper semicontinuous on the intersection of $K$ with any finite dimensional subspace of $V$
and, moreover, there exists some positive constant $c$ such that
\begin{equation} \label{ass1}
 \varphi(v,0) \leq c \|v\| \quad \forall v \in V. 
\end{equation}
We recall that the bifunction $\varphi (\cdot \, , \,\cdot) $ is pseudomonotone if $u_n\rightharpoonup u$ (weakly) in $V$ and \\ $\displaystyle \liminf _{n \to   \infty}  \varphi (u_n,u) \geq 0$ implies that $\displaystyle \limsup _{n \to \infty} \varphi (u_n,v)\leq \varphi (u,v)$ for all $v\in V$. 

In this paper we deal with the semicoercive variational problem  $ VI(A,\varphi, g, K):$ Find $u\in K$ such that 
\[
\langle A u, v-u \rangle + \varphi(u,v) \geq \langle g, v-u \rangle \quad \forall v\in K.
 \]
%With $\rightharpoonup$ denoting weak convergence in contrast to strong convergence denoted by $\rightarrow$, we require:
%Besides of the conditions (H1)-(H2) in Chapter \ref{section3}  we shall require also the following assumption, see e.g. \cite{r4cont}
From \cite{Gwinner_PhD,Gwi-94} we adopt the following assumption:
\begin{description}
\item [(A1$^s$)]  for any sequence $\{v_n\}$ with $|v_n|\to 0$, $v_n\rightharpoonup
  v$ and $\|v_n\|\geq \eta$ for some $\eta>0$ there exists a subsequence
  $\{v_{n_k}\}_{k\in \N}$ such that $v_{n_k}\to v$ in $V$.
\end{description}
According to \cite{Gwinner_PhD}, \cite[Section 5.3]{Jeggle} 
this condition is fulfilled if
%\begin{description} \item [(i)]
 the norm $\|\cdot\|$ on $V$  is equivalent to $\||\cdot\||+ |\cdot|$, where
  $\||\cdot\||$ is another norm on $V$, the dimension of $Y$ is finite and there   exists $\alpha >0$ such that 
$$
\displaystyle \inf _{y\in Y} \||v-y\|| \leq \alpha |v| \quad \forall v\in V,
$$
%\item [(ii)] $V$ is compactly imbedded in another Banach apace $(X,
 % \|\cdot\|_X)$ and  $\|\cdot\|$  is equivalent to $|\cdot|+ \|\cdot\|_X$.
%\end{description}
from which the seminorm $|\cdot|$ is continuous. % \\[0.1cm]

In addition, we resume from \cite{Gwinner_PhD,Gwi-94} that  
\begin{description}
\item [(A2$^s$)] either $(i)~ Y\cap K$ is bounded or 
$(ii)~ g$ satisfies  
\[
\langle g, y\rangle  <  0  \quad \forall y\in \{Y\cap K\} \setminus \{0\}  \,.
\]
\end{description}

Condition (A2$^s$) $(ii)$ 
%postulates that the applied forces form an obtuse angle 
%with the direction of escape. It also 
implies that the linear functional $g \in V^*$ is bounded from above on $K\cap Y$. In other words,  $g$ belongs to the barrier cone of the set
$Y\cap K$ defined by
\[
B(Y\cap K) =\{g\in V^* \, : \, \sup_{v\in Y\cap K} \, \langle g, v \rangle < \infty \}.
\]
Let the interior of the barrier cone be nonempty. Then we can show that ~$g\in \, \mbox{int} \, B(Y\cap K)$.  According to the terminology used in \cite{Adly-02,Adly-04}, this is equivalent to the property 
that the set $Y\cap K$ is well-positioned. 
\begin{proposition} Let $\mbox{int}\, B(Y\cap K)\neq \emptyset$. 
Suppose  (A1$^s$).
Then condition (A2$^s$) $(ii)$ implies that
$g \in \, \mbox{int}\, B(Y\cap K)$. % belongs to the interior of the barrier cone of $Y\cap K$.
\end{proposition}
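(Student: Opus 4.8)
The plan is to show directly that a full norm-ball around $g$ in $V^*$ is contained in the barrier cone $B(Y\cap K)$, which is precisely the meaning of $g\in\mbox{int}\,B(Y\cap K)$. First I would dispose of the trivial case: if $Y\cap K$ is bounded, then $B(Y\cap K)=V^*$ and there is nothing to prove, so assume henceforth that $Y\cap K$ is unbounded. Since it is convex and contains $0$, it then contains elements of arbitrarily large norm, hence elements of every norm in $(0,\infty)$; in particular the set $S:=\{\,u\in Y\cap K:\ \|u\|=1\,\}$ of unit vectors of $Y\cap K$ is nonempty.

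The core step is the strict estimate $\gamma:=\sup\{\,\langle g,u\rangle:\ u\in S\,\}<0$. That $\gamma\le 0$ is immediate from (A2$^s$)$(ii)$, since each $u\in S$ is a nonzero element of $Y\cap K$. I would prove $\gamma<0$ by contradiction: suppose $\gamma=0$ and choose $u_n\in S$ with $\langle g,u_n\rangle\to 0$. The sequence $\{u_n\}$ is bounded, so by reflexivity some subsequence satisfies $u_n\rightharpoonup u$; since $u_n\in Y$ forces $|u_n|=0\to 0$ and $\|u_n\|=1\ge 1>0$, assumption (A1$^s$) produces a further subsequence converging \emph{strongly} to $u$. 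Then $\|u\|=1$, so $u\ne 0$, and $u\in Y\cap K$ because $K$ is closed and $Y$ is closed (being the kernel of the lower semicontinuous seminorm $|\cdot|$); but $\langle g,u\rangle=\lim\langle g,u_n\rangle=0$ contradicts (A2$^s$)$(ii)$. Hence $\gamma<0$.

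With $\rho:=-\gamma/2>0$, take any $h\in V^*$ with $\|h\|_*\le\rho$ and estimate $\sup_{v\in Y\cap K}\langle g+h,v\rangle$ by splitting on $\|v\|$. If $\|v\|\ge 1$, then $v/\|v\|$ is a convex combination of $v\in K$ and $0\in K$, hence $v/\|v\|\in S$, so $\langle g+h,v\rangle=\|v\|\,\langle g+h,v/\|v\|\rangle\le\|v\|\,(\gamma+\rho)=(\gamma/2)\,\|v\|\le\gamma/2<0$; if $\|v\|<1$, then $\langle g+h,v\rangle\le(\|g\|_*+\rho)\,\|v\|<\|g\|_*+\rho$. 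Therefore $\sup_{v\in Y\cap K}\langle g+h,v\rangle\le\|g\|_*+\rho<\infty$, i.e.\ $g+h\in B(Y\cap K)$. Since $h$ with $\|h\|_*\le\rho$ was arbitrary, we conclude $g\in\mbox{int}\,B(Y\cap K)$.

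I expect the middle step, the strict negativity of $\gamma$, to be the only real obstacle. The delicate point is that weak convergence of the minimizing sequence alone would not help: $u_n\rightharpoonup 0$ is perfectly compatible with $\langle g,u_n\rangle\to 0$, because (A2$^s$)$(ii)$ constrains only the \emph{nonzero} elements of $Y\cap K$. It is precisely the semicoercive compactness condition (A1$^s$) that upgrades weak to strong convergence and thereby keeps the limit on the unit sphere; this is the one place where the structure of the semicoercive setting is essential. Incidentally, this argument re-derives rather than uses the standing hypothesis $\mbox{int}\,B(Y\cap K)\ne\emptyset$, whose role here is mainly to allow the conclusion to be phrased in terms of well-positioned sets in the sense of \cite{Adly-02,Adly-04}.
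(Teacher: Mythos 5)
Your proof is correct, and it takes a genuinely different (and in one respect more informative) route than the paper's. The paper argues by contradiction from the negation of $g\in\mbox{int}\,B(Y\cap K)$: it extracts $h_n$ with $\|h_n\|_*\le 1$ and $y_n\in Y\cap K$ with $\langle g+\varepsilon_n h_n,y_n\rangle>n$, shows $\|y_n\|\to\infty$, normalizes, and uses (A1$^s$) to pass to a strong limit $\tilde y\in (Y\cap K)\setminus\{0\}$ that violates (A2$^s$)$(ii)$. You instead give a direct proof: you first establish the quantitative gap $\gamma=\sup_{u\in S}\langle g,u\rangle<0$ on the unit sphere $S$ of $Y\cap K$ (again via (A1$^s$) applied to a maximizing sequence -- the same compactness mechanism, deployed at a different point), and then exhibit an explicit ball of radius $-\gamma/2$ around $g$ inside the barrier cone by splitting on $\|v\|\ge 1$ versus $\|v\|<1$. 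The two proofs are of comparable length and both, as you note, actually prove the statement without the standing hypothesis $\mbox{int}\,B(Y\cap K)\ne\emptyset$. What your version buys additionally is that the intermediate claim $\gamma<0$ is exactly the strengthened sign condition $(iii)$ that the paper introduces later for the existence theorem (with $c=-\gamma$, or any $0<c<-\gamma$); so your argument shows in passing that under (A1$^s$), condition (A2$^s$)$(ii)$ already implies $(iii)$, a connection the paper's contradiction argument leaves implicit. All steps check out: the reduction of the bounded case, the nonemptiness of $S$ in the unbounded case via convexity and $0\in Y\cap K$, the use of closedness of $Y\cap K$ under the strong convergence supplied by (A1$^s$), and the final norm estimates.
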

\begin{proof} Suppose by contradiction that $g$ does not belong to $ \mbox{int}\, B(Y\cap K)$. Then, for any sequence  $\{\varepsilon_n\}$ with $\varepsilon_n\to 0^+$, there exist  $\|h_n\|_{*}\leq 1$ and $y_n \in Y\cap K$ such that 
\begin{equation} \label{eq1}
\langle g + \varepsilon_n h_n\,, \,y_n \rangle >n.
\end{equation}
If $y_n=0$, we trivially get a contradiction. Therefore, let $y_n \neq 0$. Using (A2$^s$) $(ii)$, it follows that
\[
\varepsilon_n \|y_n\| \geq
\varepsilon_n\|h_n\|_*\|y_n\| >
 \langle g + \varepsilon_n h_n \, , \, y_n \rangle > n,
\]
% which implies that $ \|y_n\| \geq n + c_2 $ 
and therefore,  $\displaystyle \lim_{n\to \infty}\|y_n\|=+ \infty$. Define  $t_n:=\frac{1}{\|y_n\|}$ and consider the sequence $\{t_n y_n \}$. Since $0\in K$ and $K$ is convex it follows that $t_ny_n \in K$ for almost all $n$. 
Moreover, $t_ny_n \in Y$.
Since $\|t_ny_n\|=1$,  we can extract a subsequence that converges weakly to some $\tilde{y}\in V$. In virtue of (A1$^s$) we can pass to a strongly
convergent subsequence. Hence $\tilde{y} \not= 0$. Since  $Y\cap K$ is closed, we can conclude that $\tilde{y}\in Y\cap K \setminus \{0\}  $. 
 
   Multiplying (\ref{eq1}) by $t_n$, passing to the limit as $n\to \infty$ and taking into account  (A2$^s$) $(ii)$ again, we obtain
 \[
 0> \langle g, \tilde{y} \rangle  \geq \displaystyle \liminf_{n\to \infty} n \, t_n \geq 0,
  \]  
  which yields a contradiction. \qed
  \end{proof}
If $K$ is a nonvoid convex cone
 - note that this is the relevant case in view of our subsequent application to HVIs -,
the converse is also true. 
\begin{proposition} Let $K$ be a nonvoid convex cone. Then 
\[
g\in  \, \mbox{int}\, B(Y\cap K) \Rightarrow \langle g, v\rangle <0 \quad \forall y\in \{Y\cap K\} \setminus \{0\}.
\]
\end{proposition}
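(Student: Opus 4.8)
The plan is to argue by contradiction, exploiting the defining property of the \emph{interior} of the barrier cone against a conically‑scaled family of test functionals. Suppose $g \in \operatorname{int} B(Y\cap K)$ but there exists some $y_0 \in \{Y\cap K\}\setminus\{0\}$ with $\langle g, y_0\rangle \geq 0$. Since $K$ is a convex cone and $Y$ is a subspace, the ray $\{t y_0 : t\geq 0\}$ lies entirely in $Y\cap K$. The first observation is that $\langle g, y_0\rangle$ cannot be strictly positive: otherwise $\sup_{v\in Y\cap K}\langle g, v\rangle \geq \sup_{t\geq 0} t\langle g, y_0\rangle = +\infty$, contradicting $g\in B(Y\cap K)$. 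Hence we may assume $\langle g, y_0\rangle = 0$.

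Now I would use the openness of $\operatorname{int} B(Y\cap K)$: there is some $\varepsilon>0$ such that the ball $\{g + \varepsilon h : \|h\|_* \leq 1\}$ is contained in $B(Y\cap K)$. Choosing $h$ so that $\langle h, y_0\rangle = \|y_0\| > 0$ (possible by Hahn–Banach, normalizing so $\|h\|_* = 1$), we get $g + \varepsilon h \in B(Y\cap K)$, so
\[
\sup_{v\in Y\cap K}\langle g+\varepsilon h, v\rangle < \infty.
\]
But evaluating along the ray $t y_0 \in Y\cap K$ gives $\langle g+\varepsilon h, t y_0\rangle = t\big(\langle g, y_0\rangle + \varepsilon\langle h, y_0\rangle\big) = t\varepsilon\|y_0\| \to +\infty$ as $t\to\infty$, a contradiction. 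Therefore $\langle g, y\rangle < 0$ for all $y\in\{Y\cap K\}\setminus\{0\}$, which is the claim (the "$v$" in the displayed statement being a typo for "$y$").

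The argument is short and the only slightly delicate point is the very first step — ruling out $\langle g, y_0\rangle > 0$ — which is immediate once one notices that membership in $B(Y\cap K)$ already forces $g$ to be bounded above on the cone $Y\cap K$, hence nonpositive on every ray through the origin contained in it. After that, the interior/openness hypothesis is exactly what upgrades "nonpositive" to "strictly negative." No compactness or reflexivity is needed here, in contrast to the previous proposition; the conical structure of $K$ does all the work.
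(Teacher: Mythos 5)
Your argument is correct and is essentially the paper's own proof: both perturb $g$ by $\varepsilon\tilde h$, where $\tilde h$ is a Hahn--Banach norming functional for the offending $\tilde y$, and contradict membership of $g+\varepsilon\tilde h$ in the barrier cone; your phrasing via unboundedness of $t\mapsto\langle g+\varepsilon h,ty_0\rangle$ along the ray is just the ray-wise restatement of the paper's identification $B(Y\cap K)=(Y\cap K)^-$ for cones. The preliminary case split ruling out $\langle g,y_0\rangle>0$ is harmless but unnecessary, since the final computation already handles $\langle g,y_0\rangle\geq 0$ uniformly.
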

\begin{proof} First, since $K$ is cone, the barrier cone coincides with the polar cone, i.e.
\begin{equation}\label{pr2}
B(Y\cap K)= (Y\cap K)^{-}.
\end{equation}
Because $g\in  \, \mbox{int} (Y\cap K)^{-}$, 
there exists $\varepsilon >0$ such that for all $h\in V^*$ with 
$\|h\|_* \leq 1$ we have $g+\varepsilon h \in (Y\cap K)^{-}$. 

Now assume that there exists $\tilde{y}\in (Y\cap K) \setminus \{0\}$ such that $\langle g, \tilde{y} \rangle \geq 0.$ Then  by an easy consequence of the Hahn-Banach extension theorem there exists $\tilde{h} \in V^*$ such that $\|\tilde{h}\|_*=1$ and $\langle \tilde{h}, \tilde{y} \rangle =\|\tilde{y}\|$. Then
\[
\langle g+ \varepsilon \tilde{h}, \tilde{y} \rangle 
\geq  \varepsilon \langle \tilde{h}, \tilde{y} \rangle 
= \varepsilon \|\tilde{y}\| >0
\]
contradicting that $g + \varepsilon \tilde{h} \in (Y \cap K)^{-}$.
\qed
\end{proof}

\begin{remark}
The barrier cone in finite-dimensional setting has a nonempty interior. 
While this is trivial for a compact $Y \cap K$,
let us show this in the relevant case of a convex closed cone $K$. 
Then indeed,  we have $Y\cap K =\displaystyle \sum _{k=1}^n \R_{-} w_k$ with $\|w_k\|=1$. Construct $h_k\in V^*$ such that
\[
\|h_k\|_{*}=1 \quad \mbox{and} \quad \langle h_k, w_l\rangle = \delta_{kl}. 
\]
Let  $h\in V^*$ be such that $\|h\|_*\leq \frac{1}{2}$. Hence. $h=\displaystyle \sum _{k=1}^n \gamma_k h_k$ and $\langle h, w_k\rangle = \gamma_k$. Moreover, we have
\[
\frac{1}{2}\geq \|h\|_* =  \displaystyle \sup_{\|v\|=1} |\langle h, v\rangle | \geq |\langle h, w_k\rangle |=|\gamma_k|.
\]
With $e:=\displaystyle \sum _{k=1}^n h_k$ we have 
$
e+h = \displaystyle \sum _{k=1}^n (1+\gamma_k) h_k \in (Y\cap K)^-,
$
which due to (\ref{pr2}) implies the nonemptiness of the interior of the barrier cone.
\end{remark}

Let us now turn back to  the problem  $VI(A,\varphi, g, K)$.  We provide 
the subsequent existence theorem in view of application to HVIs. Since in HVIs, the bifunction $\varphi$ (see the later definition (\ref{bifunction})), cannot be controlled by the seminorm, but only by the norm (see (\ref{ass1}) that is proved in \cite[Lemma 15]{Ovcharova} for HVIs), we have to strengthen the condition (A2$^s$) (ii) to
\begin{description}
\item [(iii)] 
$\exists c>0 \, :\, \langle g, y\rangle < -c \; \mbox{for all} \; y \in Y\cap K \; \mbox{with} \; \|y\|=1. $
 \end{description}
%%%%%%%%%%%%%%%%%%%%%%%%%%%%%%%%%%%%%%%%%%%%%
Then we can show the following existence result without any compact imbedding assumption.  
\begin{theorem} \label{th:1}
Let $A :V\to V^*$ be a semicoercive  linear operator
and $\varphi :V\times V \to \R$ be a pseudomonotone bifunction 
such that (\ref{ass1}) holds and
such that $\varphi(\cdot, v)$ is upper semicontinuous on the intersection of $K$ with any finite dimensional subspace of $V$. Under assumptions (A1$^s$) and 
(A2$^s$) $(i)$ or $(iii)$, the problem $ VI(A,\varphi, g, K)$ has a solution. 
\end{theorem}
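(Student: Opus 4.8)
The plan is to reduce the semicoercive problem to a family of problems posed on bounded sets, for which existence is classical, and then to remove the truncation by means of a uniform a priori bound. First I would truncate: for $R>0$ set $C_R:=K\cap\{v\in V:\|v\|\le R\}$, which is a nonempty (since $0\in K$) bounded closed convex set. On each $C_R$ the classical existence theorem for pseudomonotone variational inequalities over a bounded closed convex set (the Ky Fan--Br\'ezis argument used in \cite{Gwi-94,GoelGwin}) applies: the upper semicontinuity of $\varphi(\cdot,v)$ on finite-dimensional sections, together with the pseudomonotonicity and the continuity of the linear operator $A$, yields a solution $u_R\in C_R$ of $\langle Au_R,v-u_R\rangle+\varphi(u_R,v)\ge\langle g,v-u_R\rangle$ for all $v\in C_R$. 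No coercivity is needed here because $C_R$ is bounded.

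The heart of the proof is a uniform a priori bound $\|u_R\|\le M$ independent of $R$. Testing the truncated inequality with the admissible $v=0$ and using (\ref{ass1}) gives
\[
c_0|u_R|^2\le\langle Au_R,u_R\rangle\le\varphi(u_R,0)+\langle g,u_R\rangle\le c\|u_R\|+\langle g,u_R\rangle.
\]
Suppose the bound fails, so that along a sequence $R_k\to\infty$ one has $\|u_{R_k}\|\to\infty$. Put $w_k:=u_{R_k}/\|u_{R_k}\|$, so $\|w_k\|=1$ and, dividing the last display by $\|u_{R_k}\|^2$, $|w_k|\to 0$. By reflexivity I may assume $w_k\rightharpoonup w$, and then (A1$^s$) (with $\eta=1$) upgrades this to strong convergence $w_k\to w$ along a subsequence; hence $\|w\|=1$ and, by lower semicontinuity of the seminorm, $|w|=0$, i.e. $w\in Y\setminus\{0\}$. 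Since $0\in K$ and $K$ is convex and closed, for each fixed $s\ge 0$ one has $sw_k\in K$ once $\|u_{R_k}\|\ge s$, whence $sw\in K$; thus the whole ray $\R_{+}w$ lies in $Y\cap K$. In case (A2$^s$)$(i)$ this unbounded ray contradicts the boundedness of $Y\cap K$. In case $(iii)$, dividing the displayed estimate by $\|u_{R_k}\|$ and dropping the nonnegative term yields $\langle g,w_k\rangle\ge -c$, so $\langle g,w\rangle\ge -c$ in the limit; but $w\in Y\cap K$ with $\|w\|=1$, so $(iii)$ forces $\langle g,w\rangle<-c$, a contradiction. Either way $\{u_R\}$ is bounded.

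With the bound in hand I would deactivate the truncation. Fix $R>M$ and take the corresponding solution $u_R$, so that $\|u_R\|\le M<R$. For arbitrary $v\in K$ and small $t\in(0,1]$ the point $u_R+t(v-u_R)$ lies in $K$ by convexity and in the open ball since $\|u_R\|<R$, hence in $C_R$; inserting it as test function, using the convexity of $\varphi(u_R,\cdot)$ and $\varphi(u_R,u_R)\le 0$ (which hold for the bifunctions arising in HVIs, cf.\ \cite{Ovcharova}), and dividing by $t$ gives $\langle Au_R,v-u_R\rangle+\varphi(u_R,v)\ge\langle g,v-u_R\rangle$ for all $v\in K$. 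Thus $u_R$ solves $VI(A,\varphi,g,K)$. The main obstacle is the a priori bound of the second paragraph: semicoercivity controls only the seminorm $|\cdot|$, so the full strength of (A1$^s$) is needed to turn the weak limit of the normalized sequence into a genuine nonzero recession direction in $Y\cap K$, and the contradiction in case $(iii)$ hinges on the sharpening of (A2$^s$)$(ii)$ to $(iii)$, namely on matching the constant $c$ there with the growth constant in (\ref{ass1}).
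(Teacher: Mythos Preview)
Your proof is correct and follows the same recession strategy as the paper. The paper invokes a cited black-box result (\cite[Theorem~3.9]{Gwinner_PhD}, \cite[Proposition]{Hess}) that reduces existence to the sign condition $\langle Av,-v\rangle+\varphi(v,0)+\langle g,v\rangle<0$ on some sphere $\|v\|=R$, and then verifies that condition by precisely the normalization-plus-(A1$^s$) argument of your second paragraph (normalize, show $|y_n|\to0$, upgrade to strong convergence via (A1$^s$), land in $(Y\cap K)\setminus\{0\}$, contradict (A2$^s$)(i) or (iii)); you instead unpack the black box by hand---truncate to $C_R$, solve there, bound uniformly, then deactivate---which is exactly what the cited result does internally. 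The one point worth flagging is that your deactivation step uses convexity of $\varphi(u_R,\cdot)$ and $\varphi(u_R,u_R)\le0$, hypotheses not listed in the theorem statement; these are, however, built into the cited existence machinery for pseudomonotone bifunctions on bounded sets and hold for the HVI bifunctions of Section~\ref{Sec:4} (where indeed $\varphi(u,u)=0$), so this is not a genuine gap relative to the paper's own argument.
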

\begin{proof} For the proof we use a recession argument that goes back to
Stampacchia \cite{Stamp}, Hess \cite{Hess} and Schatzman \cite{Schatzman}. Similar reasoning based on a recession analysis can be found in 
\cite{Gwinner_PhD,Gwi-91,Gwi-94,GoelThera95,AdlyGoelThera96,Gwi-97,Chadli-98,Chadli-99,Liu-03,Chadli-07}.
 Another approach to existence results for semicoercive pseudomonotone variational inequalities is a regularization procedure based on adding of a coercive term, see
 \cite{Gwi-97,Nani-00,Chadli-04,Chadli-14}.

In view of  \cite[Theorem 3.9]{Gwinner_PhD}, \cite[Proposition]{Hess} 
it is sufficient to show the existence of a constant $R>0$ such that
\[
\langle A v, -v \rangle + \varphi(v,0) + \langle g, v\rangle < 0 \quad \forall v \in K \quad \mbox{with} \quad \|v\|=R.
\]
Assume the contrary, i.e., there exists a sequence $v_n\in K $ such that $\|v_n\|\to \infty$ and
\[
\langle A v_n, -v_n \rangle + \varphi(v_n,0) + \langle g, v_n\rangle \geq 0.
\]
Hence, using the semicoercivity of the operator $A$ and (\ref{ass1}), it follows that
\begin{equation} \label{semieq1}
c_0|v_n|^2 \leq \langle A v_n, v_n \rangle \leq \langle g, v_n \rangle + \varphi (v_n, 0) \leq \|g\|_*\|v_n\| + c \|v_n\|. 
\end{equation}  
Set $y_n=\frac{v_n}{\|v_n\|}$. Since $0\in K$ and $K$ is convex,  it follows that
 $y_n \in K$ for large enough $n$ as well. Moreover, $\|y_n\|=1$ and consequently, we  can extract a subsequence that converges weakly to some $y$ in the weakly closed set $K$. 
 
Now, we show that $|y_{n}|_V\to 0$. Assume not. Consequently, there exists a 
subsequence, again denoted by $\{y_{n}\}$, such that $|y_{n}|\geq c_3>0$.
Dividing  (\ref{semieq1}) by $\|v_{n}\|$ yields
$$
c_0 \|v_{n}\| \, |y_{n}|^2 \leq \|g\|_{*} + c
$$
and hence,
$$
|y_{n}| \leq \frac{\|g\|_{*}+c}{c_0|y_{n}|\|v_{n}\|}\leq \frac{\|g\|_{*} + c} {c_0\, c_3 
\|v_{n}\|}.
$$
Since $\|v_{n}\|\to \infty$, we arrive at a contradiction, and therefore $|y_{n}|\to 0$. 
Further, by {\bf (A1$^s$)}, we can extract a subsequence again denoted by $\{y_n\}$ that converges strongly  
to $y$ in $V$. Since $ \|\cdot\| $ is   continuous, we have  $\|y\|=1$ and,
in particular, $y\neq 0$.  Moreover, by the continuity of $|\cdot|$ it follows that $|y|=0$.
In conclusion,  $y \in Y \cap K$ and $y\neq 0$. 

Next we claim that $\lambda y$ belongs to $K$ for any $\lambda>0$. Indeed, because of $\|v_n\|\to \infty$, there exists $n_0$ such that $\|v_n\|>\lambda$ for all $n\geq n_0$. By convexity, $\lambda y_{n}\in K$ for all $n\geq n_0$, and by the closedness of $K$, $\lambda y \in K$. Hence,  if $Y\cap K$ is bounded, the existence of $y \in Y \cap K$, $y\neq 0$,  leads immediately to a contradiction.
Otherwise, we obtain from (\ref{semieq1}) that 
\begin{equation} \label{semi2}
0 \leq %\langle g_1, v_{n} \rangle + 
 \langle g, v_{n} \rangle + c \|v_n\|.
\end{equation}
Dividing (\ref{semi2}) by $\|v_{n}\|$  we arrive in the limit at 
$$
0 \leq \langle g,y \rangle + c,
$$
which is a contradiction to 
%(A2$^s$) (ii).
(iii).
% Also, the family $\{v_{n}\}$ is bounded in $V$.
\qed
\end{proof} 

%\begin{remark}
Some comments on the conditions (A2$^s$) are in order now.
For bilateral contact with given 
$\underline{h} \leq \overline{h}$ in $L^\infty(\Gamma_c)$ on the boundary part $\Gamma_c$ of a bounded domain $\Omega$ 
and the constraint set $\tilde{K} = \{ u \in V:~ \underline{h} \leq u|\Gamma_c \leq \overline{h} \}$ in a Sobolev space $V$ on $\Omega$,
simply set $K = \tilde{K} - \tilde {\underline{h}} $, 
where $\tilde {\underline{h}} \in V$ extends $\underline{h} $ to $\Omega$.
Then the existence theorem applies to the bounded set $Y \cap K$. -  
To simplify the interpretation of the conditions involved in the more delicate unbounded case, assume $V$ is a Hilbert space, as in the subsequent section on HVIs.
Condition (A2$^s$) $(ii)$ 
postulates that the applied force $g$ forms an obtuse angle 
with the directions $y$ of escape.
In contrast, condition $(iii)$ demands together with the Cauchy-Schwarz inequality
that for all $y \in Y\cap K \; \mbox{with} \; \|y\|=1. $
$$
-1 \leq  \frac{1}{\|g\|_{*}}~  \langle g, y \rangle < - \frac{c}{\|g\|_{*}} .
$$
This means that the directions $y$ of escape should stay in a given angle range 
with the applied force $g$ and moreover $g$ should be large enough,
$\| g \| > c$. 
%\end{remark}

\section{General approximation acheme for semicoercive pseudomonotone variational inequalities} \label{Sec:3}
Let $T$ be a directed set  %We follow the approximation scheme of Glowinski \cite{Glowinski}  
and  $\{V_t\}_{t\in T}$ be a family of finite-dimensional subspaces of $V$.  While $K$ is contained in $V$, $K_t$ is a nonempty, closed convex subset of $V_t$, not necessarily contained in $K$. Therefore, for the approximation of $K$ by $K_t$, we employ Mosco convergence, see the  hypotheses (H1) - (H2) below.
\begin{description}
\item[(H1)] If  $\{v_{t'}\}_{t'\in T'}$ weakly converges to $v$ in $V$, $v_{t'} \in
  K_{t'} ~(t' \in T') $ for a subnet $\{K_{t'}\}_{t'\in T'}$ of the  net  $\{K_t\}_{t\in T}$,  then   $v\in K$.
\item[(H2)] For any $v\in K$ and any $t \in T$ there exists $v_{t} \in K_{t}$ 
such that  $v_{t} \to v$  in $V$.
\end{description}
Since $0\in K$, by a translation argument, we can simply assume that $0\in K_t$ for all $t\in T$. 

Further, we replace $\varphi$ by  some some approximation $\varphi_t$ satisfying
\begin{description}
\item[(H3)] For any $t \in T$, $\varphi_t$ is pseudomonotone,
$\varphi_t(\cdot, v)$ is upper semicontinuous on the intersection of $K_t$ with any finite dimensional subspace of $V$, and
$$
\varphi_t(u_t,0) \leq c\|u_t\| \quad \forall u_t \in K_t \,.
$$ 
\item[(H4)] For any nets $\{u_t\}$ and $\{v_t\}$ 
 such that  $u_t \in K_t$,   $v_t\in K_t$, 
  $u_t \rightharpoonup u$, and $v_t \to v$ in $V$ it follows that 
$$
\displaystyle {\limsup_{t \in T}}\,~ \varphi_t(u_t,v_t) \leq \varphi(u,v) \,.
$$
\end{description}

Altogether the problem $VI(A,\varphi,g,K)$ is approximated by the problem 
$VI(A,\varphi_t,g, K_t)$:  Find $u_t\in K_t$ such that
\[
\langle A u_t, v_t-u_t \rangle + \varphi_t (u_t,v_t)\geq \langle g, v_t-u_t \rangle \quad \forall v_t \in K_t.
\]
 In some computations it will be necessary to replace also $A$ and $g$ by some approximations $A_t$ and $g_t$, defined for example by a numerical integration procedure which is used in the finite element discretization.
 But this is rather standard; therefore we do not elaborate on this aspect. In view of our applications to hemivariational inequalities, we assume also that $\varphi(u,u)=0$ for all $u\in V$.  %where the most difficulty comes from the treatment of the general nonsmooth functional $\varphi$,   we focus here  on its approximation $\varphi_t$. 
\begin{theorem} \label{th:2}
Under assumptions $(A1^s)$-$(A2^s)$, and hypotheses $(H1)-(H4)$, the family $\{u_t\}$ of solutions to $VI(A,\varphi_t,g,K_t)$ is uniformly bounded in $V$. Moreover, there exists a subnet $\{u_{t'}\}_{t'\in T'}$ of $\{u_t\}$ that converges weakly  in $V$ to  a solution
$u$ of  the problem $VI(A, \varphi,g,K)$ and 
satisfies  $\displaystyle \lim _{t'\in T'} |u_{t'}-u |=0$.
%  for a subnet $\{u_{t'}\}_{t'\in T'}$.
\end{theorem}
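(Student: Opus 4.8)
The plan is to establish the uniform boundedness first, then extract a weakly convergent subnet, upgrade it to a strongly-convergent-modulo-seminorm subnet, and finally pass to the limit in the discrete variational inequality using the Mosco conditions (H1)--(H2) together with the pseudomonotonicity hypotheses (H3)--(H4). The argument for uniform boundedness mirrors the recession analysis in the proof of Theorem~\ref{th:1}: testing $VI(A,\varphi_t,g,K_t)$ with $v_t = 0 \in K_t$ gives
\[
\langle A u_t, u_t \rangle \leq \langle g, u_t \rangle + \varphi_t(u_t, 0) \leq \|g\|_* \|u_t\| + c\|u_t\|,
\]
using $(H3)$, so that $c_0 |u_t|^2 \leq (\|g\|_* + c)\|u_t\|$.

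**Boundedness.** Suppose, for contradiction, that $\{u_t\}$ is not uniformly bounded; then along a subnet $\|u_t\| \to \infty$. Put $y_t = u_t / \|u_t\|$; by the translation convention $0 \in K_t$ and convexity, $y_t$ lies in $K_t$ for $t$ large in the directed set, $\|y_t\| = 1$, and a further subnet converges weakly to some $y$. Dividing the displayed estimate by $\|u_t\|^2$ gives $c_0 |y_t|^2 \leq (\|g\|_* + c)/\|u_t\| \to 0$, so $|y_t| \to 0$; then $(A1^s)$ yields a strongly convergent subnet, whence $\|y\| = 1$, $|y| = 0$, and by $(H1)$ we get $y \in Y \cap K$ with $y \neq 0$. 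As in Theorem~\ref{th:1} one checks $\lambda y \in K$ for every $\lambda > 0$, so under $(A2^s)(i)$ this already contradicts boundedness of $Y \cap K$; under $(A2^s)(ii)$ one divides $c_0|u_t|^2 \leq \langle g, u_t\rangle + c\|u_t\|$ by $\|u_t\|$, passes to the limit to obtain $0 \leq \langle g, y\rangle + c$, contradicting the sign condition (using $(iii)$ when the bifunction is only norm-controlled, exactly as in the cited theorem). Hence $\{u_t\}$ is uniformly bounded.

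**Passage to the limit.** Since $V$ is reflexive, extract a subnet $\{u_{t'}\}$ with $u_{t'} \rightharpoonup u$; by $(H1)$, $u \in K$. Fix any $v \in K$; by $(H2)$ choose $v_{t'} \in K_{t'}$ with $v_{t'} \to v$. Testing $VI(A,\varphi_{t'},g,K_{t'})$ with this $v_{t'}$ and using $\langle A u_{t'}, v_{t'} \rangle \to \langle Au, v\rangle$ (linearity and boundedness of $A$ give weak continuity of $\langle A\cdot,\cdot\rangle$ in the first slot against the strongly convergent second slot, plus $\langle Au_{t'},v_{t'}\rangle = \langle Au_{t'},v\rangle + \langle Au_{t'},v_{t'}-v\rangle$ with the last term $\to 0$ by boundedness of $\{u_{t'}\}$), we obtain
\[
\langle Au, v\rangle + \limsup_{t'} \varphi_{t'}(u_{t'}, v_{t'}) - \langle g, v-u\rangle \;\geq\; \limsup_{t'}\big[\langle Au_{t'}, v_{t'}-u_{t'}\rangle + \varphi_{t'}(u_{t'},v_{t'}) - \langle g, v_{t'}-u_{t'}\rangle + \langle Au_{t'},u_{t'}\rangle - \langle Au, v\rangle\big]
\]
—more cleanly: from the inequality, $\langle Au_{t'},u_{t'}\rangle \leq \langle Au_{t'}, v_{t'}\rangle + \varphi_{t'}(u_{t'},v_{t'}) - \langle g, v_{t'}-u_{t'}\rangle$. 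The key obstacle, and the heart of the proof, is to control $\langle Au_{t'}, u_{t'}\rangle$ and the bifunction term from below. One takes $\limsup$: the right-hand side converges to $\langle Au,v\rangle - \langle g, v-u\rangle$ plus $\limsup \varphi_{t'}(u_{t'},v_{t'}) \leq \varphi(u,v)$ by $(H4)$; for the left, since $A$ is monotone (semicoercive linear operators are monotone: $\langle Aw,w\rangle \geq c_0|w|^2 \geq 0$) one has the weak lower-semicontinuity-type bound via $\langle Au_{t'},u_{t'}\rangle \geq \langle Au_{t'},u\rangle + \langle Au, u_{t'}-u\rangle \to \langle Au,u\rangle$, i.e. $\liminf_{t'}\langle Au_{t'},u_{t'}\rangle \geq \langle Au,u\rangle$. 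Combining, $\langle Au,u\rangle \leq \langle Au,v\rangle + \varphi(u,v) - \langle g,v-u\rangle$ for all $v \in K$, which is precisely $VI(A,\varphi,g,K)$.

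**Strong convergence of the seminorm.** Finally, to get $|u_{t'}-u| \to 0$, take $v = u$ in $(H2)$ to obtain $w_{t'} \in K_{t'}$ with $w_{t'} \to u$, test the discrete problem with $w_{t'}$, and use semicoercivity:
\[
c_0 |u_{t'}-u|^2 \leq \langle A(u_{t'}-u), u_{t'}-u\rangle = \langle Au_{t'}, u_{t'}\rangle - \langle Au_{t'}, u\rangle - \langle Au, u_{t'}-u\rangle.
\]
Bound $\langle Au_{t'},u_{t'}\rangle \leq \langle Au_{t'}, w_{t'}\rangle + \varphi_{t'}(u_{t'},w_{t'}) - \langle g, w_{t'}-u_{t'}\rangle$; every term on the right has a limit ($\langle Au_{t'},w_{t'}\rangle \to \langle Au,u\rangle$, $\langle Au_{t'},u\rangle \to \langle Au,u\rangle$, $\langle Au,u_{t'}-u\rangle\to 0$, $\langle g,w_{t'}-u_{t'}\rangle \to \langle g,0\rangle=0$, and $\limsup\varphi_{t'}(u_{t'},w_{t'}) \leq \varphi(u,u) = 0$ by $(H4)$ and the standing assumption $\varphi(u,u)=0$), so $\limsup_{t'} c_0|u_{t'}-u|^2 \leq 0$, giving $|u_{t'}-u|\to 0$. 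The main delicacy throughout is the bookkeeping with nets and subnets — one must be careful that the subnet producing the contradiction in the boundedness step, and the subnet realizing the various $\limsup$'s in the limit passage, are chosen consistently — but no new idea beyond the recession argument of Theorem~\ref{th:1} and the Mosco/pseudomonotonicity machinery is needed.
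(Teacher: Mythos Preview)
Your proof is correct and follows essentially the same route as the paper's own proof: the recession argument of Theorem~\ref{th:1} (adapted via $(H1)$ and the convention $0\in K_t$) for uniform boundedness, weak lower semicontinuity of $v\mapsto\langle Av,v\rangle$ together with $(H2)$ and $(H4)$ for the limit passage, and a semicoercivity estimate with the recovery net $w_{t'}\to u$ for the seminorm convergence. The only cosmetic difference is in the last step: the paper first shows $|\bar u_t-u_t|\to 0$ from $c_0|\bar u_t-u_t|^2\leq\langle A(\bar u_t-u_t),\bar u_t-u_t\rangle$ and then invokes the triangle inequality, whereas you estimate $|u_{t'}-u|$ directly; both arrangements use the same ingredients and yield the same conclusion.
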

\begin{proof} 
The existence and uniform boundedness of the family $\{u_t\}$ can be shown by using the same arguments as those used to prove Theorem \ref{th:1}.  
Further, we can extract a subnet, again denoted by
$\{u_{t}\}$, such that 
$
u_{t} \rightharpoonup u.
$
In view of (H1), $u$ belongs to   $K$. 
Since $v \mapsto \langle Av,v\rangle $ is convex and continuous, hence weakly lower semicontinuous,
\begin{equation} \label{wlsc}
\langle Au,u\rangle \leq  \liminf_{t\in T} \, \langle Au_t,u_t \rangle \,.
\end{equation}
Now, take  an arbitrary   $v \in K$. By (H2),  there exists  a net $\{v_t\}$ such that $v_t \in K_t$ and $v_t \to v$ in $V$. 
By (H4) and (\ref{wlsc}), we get from $VI(A, \varphi_t,g,K_t)$ that for any $v \in K$ 
\begin{eqnarray*}
\langle A u, v-u \rangle + \varphi(u,v) &\geq& 
\limsup_{t\in T} \langle A u_t, v_t-u_t \rangle + \limsup_{t\in T} \varphi_t (u_t, v_t) \\ 
&\geq& \limsup _{t\in T}\,\{ \langle A u_t, v_t-u_t \rangle + \varphi_t (u_t, v_t)\} \\ &\geq& \lim_{t\in T} \, \langle g, v_t-u_t \rangle = \langle g, v-u \rangle
\end{eqnarray*}
and consequently, $u$ is a solution to $VI(A, \varphi,g,K)$. 

Finally, we show the convergence with respect to the seminorm $| \dot |$. For this purpose, by (H2), we find a subnet $\{\bar{u}_t\}$, $\bar{u}_t\in K_t$ such that $\bar{u}_t \to u$.
We start with the relation
\begin{equation} \label{semrelation}
c |\bar{u}_t-u_t|^2 \leq 
\langle A \bar{u}_t, \bar{u}_t-u_t \rangle +
\langle A {u}_t, u_t - \bar{u}_t \rangle .
\end{equation}
The first term goes to zero, since $A\bar{u}_t\to Au$ and $\bar{u}_t-u_t \rightharpoonup 0$ in $V$. 

From the definition of $VI(A, \varphi_t,g,K_t)$ it follows that
\[
\langle A u_t, u_t-\bar{u}_t \rangle \leq \langle g, u_t-\bar{u}_t \rangle 
  + \varphi_t(u_t, \bar{u}_t).
\]
Hence by (H4),
\[
\limsup_{t\in T} \langle A u_t, u_t-\bar{u}_t \rangle 
\leq \limsup _{t\in T} \varphi_t(u_t, \bar{u}_t) \leq \varphi (u,u)=0,
\]
and therefore, (\ref{semrelation}) entails in the limit that
$ \limsup_{t\in T} c |\bar{u}_t-u_t|^2 \leq 0 $,
hence, $ | \bar{u}_t-u_t| \to 0 $.
Further, using the triangle inequality, we get in the limit
$$
0 \leq \lim_{t\in T} |u_{t}-u| \leq \lim_{t\in T} |\bar{u}_{t}-u_{t}|_V + \lim_{t\in T}
|\bar{u}_{t}-u|_V = 0
$$
and the proof  is complete. \qed
\end{proof}
%%%%%%%%%%%%%%%%%%%%%%%%%
\section{Approximation of a semicoercive hemivariational inequality} \label{Sec:4}
Let $V=H^1(\Omega; \R^d)$ $(d=2,3)$ and $K$ be a nonempty closed convex subset of $V$ which will be specified later. Let $\Omega\subset \R^d$  be a bounded domain with a Lipschitz boundary $\partial \Omega$. 
Decompose $\partial \Omega$ into a Neumann part $\Gamma_N$ and a contact part $\Gamma_c$ with positive measure. Note that here the Dirichlet part is assumed to be empty. 

We prescribe  surface tractions $\mathbf {t}  \in L^2(\Gamma_N; \R^d)$ on $\Gamma_N$ and nonmonotone, generally multivalued boundary conditions on $\Gamma_c$. Moreover, we suppose also that the body is subject to volume force $\mathbf{f} \in L^2(\Omega; \R^d)$. 

 % Assume that %the boundary is decomposed into three disjoint parts $\Gamma_D, \Gamma_N$, and $\Gamma_C$ 
%$\partial \Omega = %\overline{\Gamma}_D \cup 
%\overline{\Gamma}_N \cup \overline{\Gamma}_c$ such that %the Dirichlet part  $\Gamma_D=\emptyset$, 
%surface tractions $\mathbf {t}  \in L^2(\Gamma_N; \R^d)$ act on the Neumann part $\Gamma_N$ and on the contact part $\Gamma_C$ (with a positive measure) a nonmonotone, generally multivalued boundary condition holds.  In this paper we study semicoercive case, that means that the Dirichlet part $\Gamma_D$ is empty.  Moreover, the body is subject to volume force $\mathbf{f} \in L^2(\Omega; \R^d)$.  \\
Let $a(\cdot, \cdot)$ be the  bilinear form of the linear elasticity, i.e. 
\[
a(u,v)=\int_\Omega C_{ijhk}\varepsilon_{ij}(u)\varepsilon_{hk}(v) \, dx 
=: \langle Au,v \rangle, 
\]
where $C_{ijhk}\in L^\infty$ and $\mathcal{C}=\{C_{ijhk}\}$ is assumed to be uniformly positive definite , and $\varepsilon (u)$ is the symmetric strain tensor defined by
\[
\varepsilon_{ij}=\frac{1}{2}(u_{i,j}+u_{j,i}).
\]
Then, the bilinear form is semicoercive and the space of rigid body motions is 
\[
\mbox{ker}\;a (\cdot , \cdot) =\{u \in H^1(\Omega; \R^2) \, : \, a(u,u)=0 \}\neq \{0\}.
\]
In particular, 
\begin{description}
\item(i) if $\Omega \subset \R^2$ then 
$
\mbox{ker}\;a (\cdot , \cdot) = \{ u(x)=( a_1-bx_2,  a_2+ b x_1), \; a_1, a_2, b\in \R, \;\forall x\in \Omega\};
$ \\
\item(ii) if $\Omega \subset \R^3$ then 
$
\mbox{ker}\;a (\cdot , \cdot) = \{ u(x)=a+b \wedge x, \;  a,  b\in \R^3, \; \forall x\in \Omega\}.
$
\end{description}
We define %$\varphi : V\times V \to \R$ by
\begin{equation} \label{bifunction}
\varphi(u,v):=\displaystyle
{\int_{\Gamma_c}} f^0(\gamma u(s);\gamma v(s)-\gamma u(s)) \, ds \quad \forall
u, v \in V,
\end{equation}
the linear form 
\[
\langle g, v\rangle := \int _\Omega \mathbf{f} \cdot v \, dx+ \int _{\Gamma_N} \mathbf{t} \cdot  \gamma v\, ds, 
\]
and consider the semicoercive hemivariational inequality: Find $u \in K$ such that
\begin{equation} \label{semipr}
\langle Au,v-u \rangle + \varphi(u,v) \geq \langle g, v-u\rangle \quad \forall v\in K.
\end{equation}
Here, $f^0(\xi;\eta)$ is the generalized Clarke derivative \cite{Clarke} of a locally Lipschitz function $f \, :\, \R^d \to \R$ at $\xi\in \R^d$ in the direction $\eta \in \R^d$, and   $\gamma$ stands  for the trace  operator from  $H^1(\Omega;\R^d)$ into $L^2(\Gamma_c;\R^d)$ with the norm $\gamma_0$. It is worthwhile to note that the genaralized Clarke derivative coinsides with the classical directional derivative only for Clarke regular functions, like convex and maximum-type functions.

Further, we require the following growth condition on the locally Lipschitz superpotential 
$f : \R \to \R $:
\[
\begin{array} {ll}
 (i) & |\eta| \leq c_3 (1+|\xi|) \; \mbox{for all}\; \eta \in \partial f (\xi) \; \mbox{with}\; c_3>0; \\[0.2cm]
 (ii) & \eta (-\xi) \leq c_4|\xi|  \; \mbox{for all}\; \eta \in \partial f (\xi) \; \mbox{with} \; c_4>0;
\end{array}
\]
According to \cite[Lemma 15]{Ovcharova} the bifunction $\varphi : V\times V \to \R$ is well-defined, pseudomonotone, upper-semicontinuous and the condition (\ref{ass1}) is satisfied with 
\begin{equation} \label{constant_c}
c:=c_4 \,\mbox{meas}(\Gamma_c)^{1/2} \,\gamma_0.
\end{equation}
According to Theorem \ref{th:1}, the HVI (\ref{semipr}) has at least one solution provided that 
$
\langle g, v\rangle < - c
$
for all $v\in K \cap \, \mbox{ker} \, a(\cdot\,, \, \cdot)$ with $\|v\|=1$. 

The approximation of the HVI is based first on the smoothing of the nonsmooth functional $\varphi$ defined on the contact boundary and then, on discretizing of the regularized problem by finite elements. For more details we refer to  \cite{Ov_Gwin2014,Ovcharova}.
\subsection{Regularization} 
 Let $S : \R^d \times \R_{++} \to \R$ be a continuously differentiable approximation of $f$ in the sense that
\[
\displaystyle \lim_{z \to x, \varepsilon \to 0^+} S(z, \varepsilon)=f(x) \quad \forall x\in \R^d.
\]
Define
$D J_{\varepsilon}: V \to V^*$  by
$$
\langle D J_{\varepsilon}(u), v \rangle = \displaystyle
{\int_{\Gamma_c}}  \nabla_x S(\gamma u(s), \varepsilon)\cdot \gamma v(s)  \,ds.
$$
The regularized problem reads now as follows: ~find $u_{\varepsilon}\in K$ such that 
\begin{eqnarray} \label{regpr}
\langle Au_{\varepsilon}-g,v -u_{\varepsilon}\rangle +
\langle D J_{\varepsilon}(u_\varepsilon), v-u_\varepsilon \rangle \geq 0
\quad \forall v\in K.
\end{eqnarray}
\subsection{Finite Element Discretization} 
We consider a regular triangulation $\mathcal{T}_h$ of $\Omega$ and define
\[
V_h =\{ v_h\in C (\bar{\Omega}; \R^d) \, : \, v_h|_T \in (\p_1)^d, \; \forall T \in \mathcal{T}_h\}
\]
as a space of all continuous piecewise linear functions. Here, $\p_1$ consists of all polynomials of degree at most one.  In addition,   
we have a family $\{K_h\}$ of nonempty closed convex subsets of $V_h$ that will be specified later on, not necessarily contained in $K$, such that  (H1) and (H2) are satisfied. 
We use  trapezoidal quadrature rule to approximate $\langle D J_\varepsilon (\cdot), \cdot \rangle$ as follows
\begin{eqnarray*} 
\langle D J_\varepsilon(u_{h}), v_{h} \rangle &\approx &
%\langle D J_{\varepsilon,h}(u_{h}), v_{h} \rangle 
   \frac{1}{2}\sum_i
|P_iP_{i+1}|\,\big[ \nabla_x S(
   \gamma u_{h} (P_i),\varepsilon) \cdot \gamma
v_{h}(P_i)  \nonumber +  \nabla_x S(
   \gamma u_{h} (P_{i+1}), \varepsilon) \cdot \gamma
v_{h}(P_{i+1})  \big ] \\ &=: & \varphi_{\varepsilon,h}(u_h, v_h), 
\end{eqnarray*}
where we have summed over all sides $(P_i,P_{i+1})$ of the triangles of $\mathcal{T}^h$ whose union gives the contact boundary $\Gamma_c$.

The discretization of the regularized problem (\ref{regpr}) reads now:  Find $u_{\varepsilon h}\in K_h$ such that
\begin{equation} \label{discpr}
a(u_{\varepsilon h},v_h-u_{\varepsilon h}) + \varphi_{\varepsilon,h} (u_h,v_h) 
%\langle D J_{\varepsilon h} (u_{\varepsilon h}), v_h-u_{\varepsilon h}\rangle 
\geq \langle g, v_h-u_{\varepsilon h} \rangle \quad \forall v_h \in K_h.
\end{equation}
According to \cite{Ovcharova}, $\varphi_{\varepsilon, h}$ is pseudomonotone and satisfies the hypotheses (H3) and (H4). Therefore, due to Theorem \ref{th:2} with 
$t=(\varepsilon,h) \in T = \R_{++} \times \R_{++}$, there exists a solution $u_{\varepsilon h}$ to the discrete regularized problem (\ref{discpr}), the family $\{u_{\varepsilon h}\}$ is uniformly bounded and any weak accumulation point of $\{u_{\varepsilon h}\}$ is a solution of the problem (\ref{semipr}). 
\section{Numerical Results} \label{sec:4}
\subsection{Statement of the problem}
As a model example we consider an unilateral contact of an elastic body with a rigid foundation under given forces and a nonmonotone friction law on the contact boundary. %  Fig. \ref{fig:2}. 
Let  $\Omega$ be the linear elastic body represented by the unit square $1m\times 1m$   with  modulus of elasticity $E= 2.15\times 10^{11} N/m^2$ and Poisson's ration $\nu =0.29$ (steel). The boundary $\Gamma:=\partial \Omega$ is decomposed into a Neumann part and a contact part. We emphasize that in all our benchmark examples no Direchlet boundary  part is assumed. In particular, on the part $\Gamma_3$ of the boundary  we assume that the horizontal displacement $u_1$ is zero, but the vertical displacement $u_2$ is not fixed, see Fig. \ref{fig:1}. 

The linear Hooke's law is given by
\begin{equation} \label{tau}
\sigma_{ij}(\mathbf{u})= \frac{E\nu}{1-\nu^2} \delta_{ij} \, tr \big (\mathbf{\varepsilon} (\mathbf{u}) \big)+
\frac{E}{1+\nu}\varepsilon_{ij}(\mathbf{u}), \quad i,j =1,2,
\end{equation}
where $\delta_{ij}$  is the  Kronecker symbol  and 
$$
tr \big (\mathbf{\varepsilon} (\mathbf{u}) \big):= \varepsilon_{11}(\mathbf{u})+ \varepsilon_{22}(\mathbf{u}).
$$
\begin{figure} 
    \subfigure[Wall left%; $u_1=0$ on $\Gamma_3$, $u_2$- 	arbitrary
    ]{\includegraphics[trim=0cm 18cm 0cm 2cm, clip,width=0.55\textwidth]{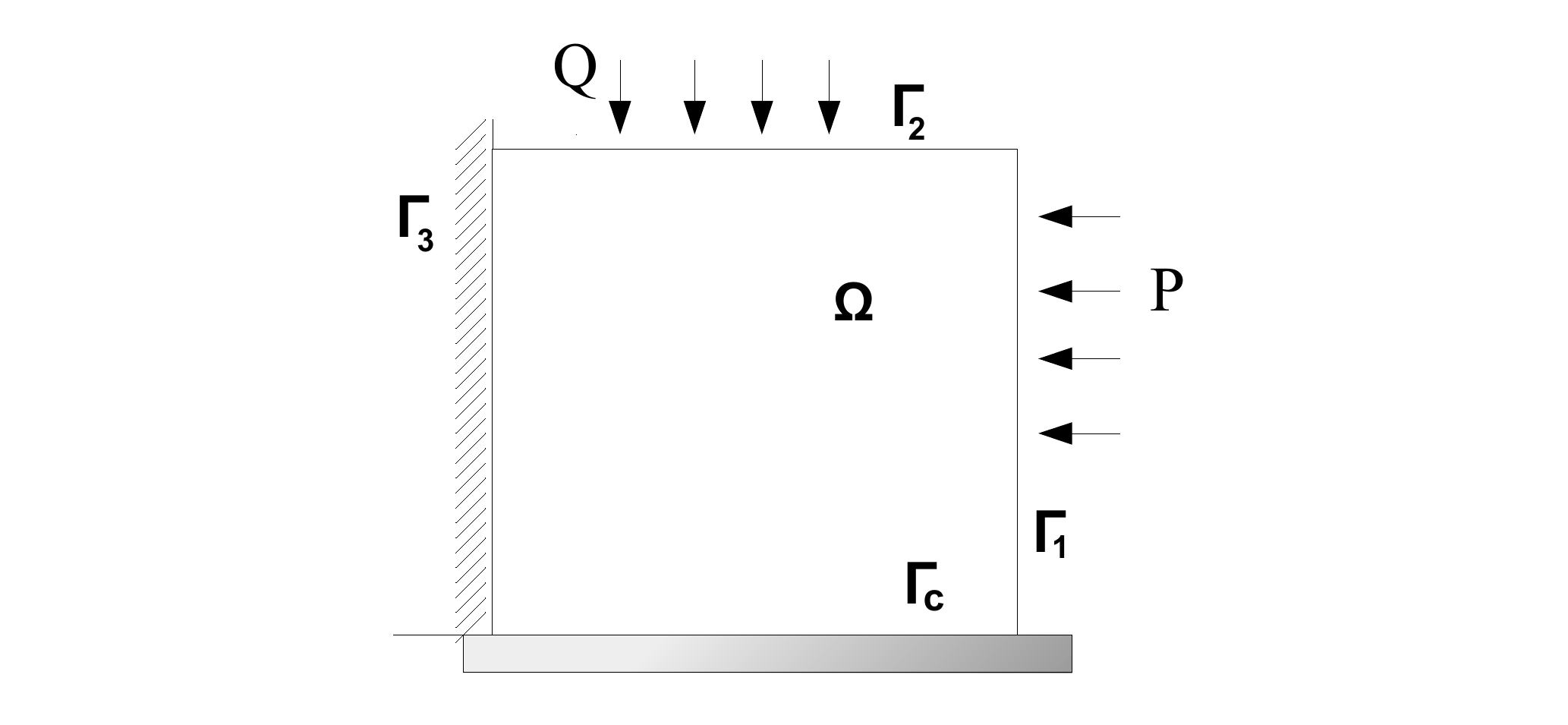}}
    \subfigure[Wall right %; $u_1=0$ on $\Gamma_3$, $u_2-$ 	arbitrary
    ]{\includegraphics[trim=0cm 18cm 0cm 2cm, clip,width=0.55\textwidth]{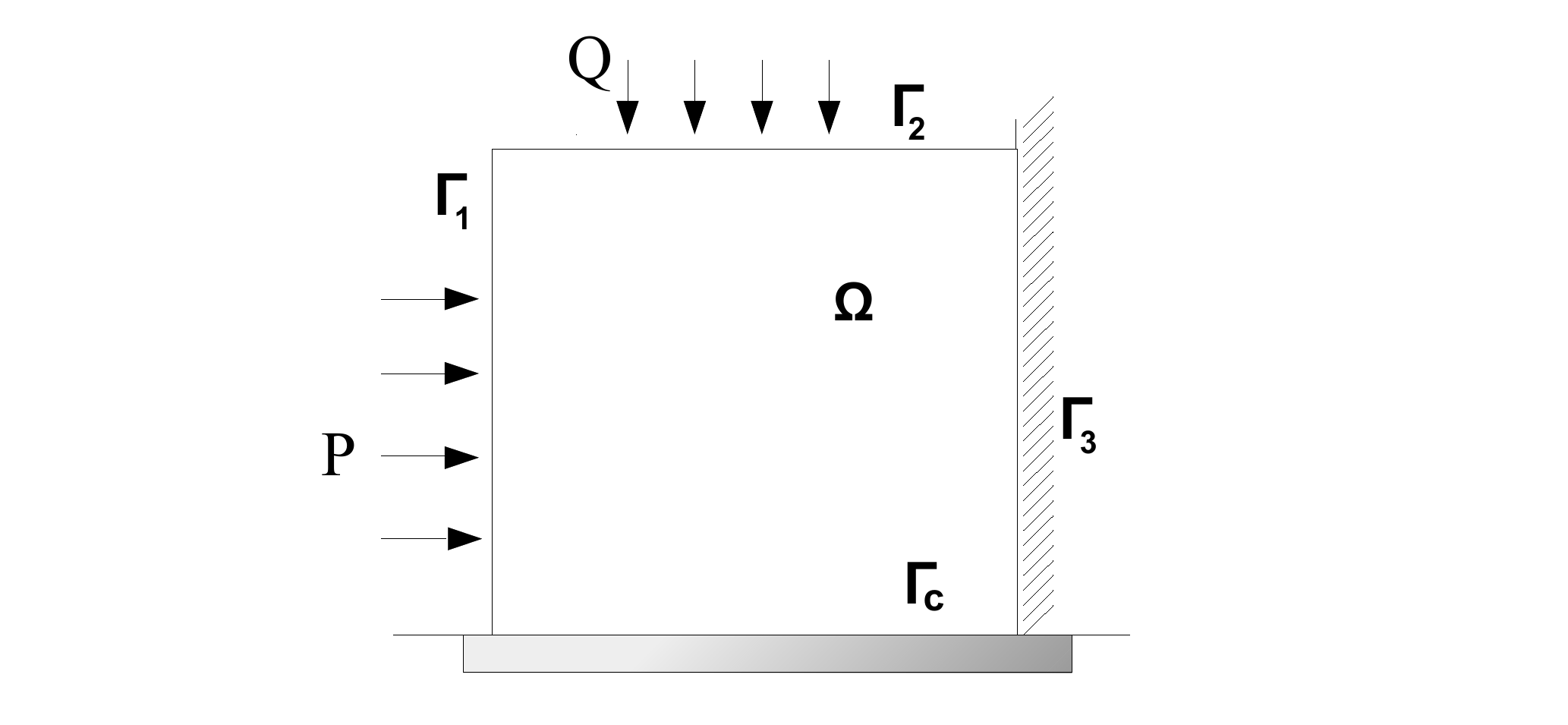}}
\caption{\small 2D benchmark examples with force distribution and boundary decomposition. In both cases $u_1=0$ on ~$\Gamma_3$, $u_2$- 	arbitrary on $\Gamma_3$} \label{fig:1}
\end{figure} 
%\begin{figure}[h]
%\vspace{-0.5cm}
%\begin{center}
%\includegraphics[trim=0cm 18cm 0cm 2cm, clip, width=0.7\textwidth]{ex1bil1} 
%\end{center}
%\vspace {-0.5cm}
%\caption{\small 2D benchmark examples with force distribution and boundary decomposition} \label{fig:1}
%\end{figure}
The body is loaded with horizontal forces $\mathbf{F}_1$ on $\Gamma_{1}$ and vertical forces $\mathbf{F}_2$ on $\Gamma_{2}$. The
volume forces are neglected. In our experiments we have used the data:
\[
\begin{array}{lll}
\mathbf{F}_1=(\pm P,0) & \mbox{with} & P=1 \times 10^6 N/m^2\\[0.1cm]
\mathbf{F}_2=(0,-Q)  & \mbox{with} & Q=1 \times 10^6 N/m^2 .
\end{array}
\]
Further, let ${\mathbf n}$ be the unit outer normal vector on the boundary $\partial \Omega$. The stress vector on the
surface   is decomposed into the normal, respectively, the tangential stress:
\[
 \sigma_n:=\sigma(u)\mathbf{n}\cdot {\mathbf n}, \quad \sigma_t:=\sigma(u)\mathbf{n}-\sigma_n {\mathbf n}.
\]
In addition, we assume that
$$
\left\{
\begin{array}{cl}
    u_2(s)\geq 0 \quad &  
                s\in \Gamma _c\\[0.1cm]
   -\sigma_t(s) \in \partial j( u_1(s)) \quad &  \mbox{for a.a.} \; s\in \Gamma _c .
\end{array}
\right.
$$
The assumed nonmonotone multivalued law $\partial j $ holding in the tangential (horizontal)  direction is depicted in Fig.  \ref{fig:2} 
with parameters $\delta=9.0\times 10^{-6} m$, $\gamma_1=1.0\times 10^3 N/m^2$ and $\gamma_2=0.5\times 10^3 N/m^2. $  Notice that here $j$ is  a minimum of one convex quadratic and  one linear function, i.e.  
%\begin{equation} \label{min_function}
\[
j(x)=\min \{\frac{\gamma_1}{2\delta} x^2, \gamma_2 x\}.
\]
%\end{equation}
%for some $\alpha, \beta >0$.  
\begin{figure}[t]
\begin{center}
{\includegraphics[trim=5cm 12cm 0cm 9cm, clip, width=0.7\textwidth]
{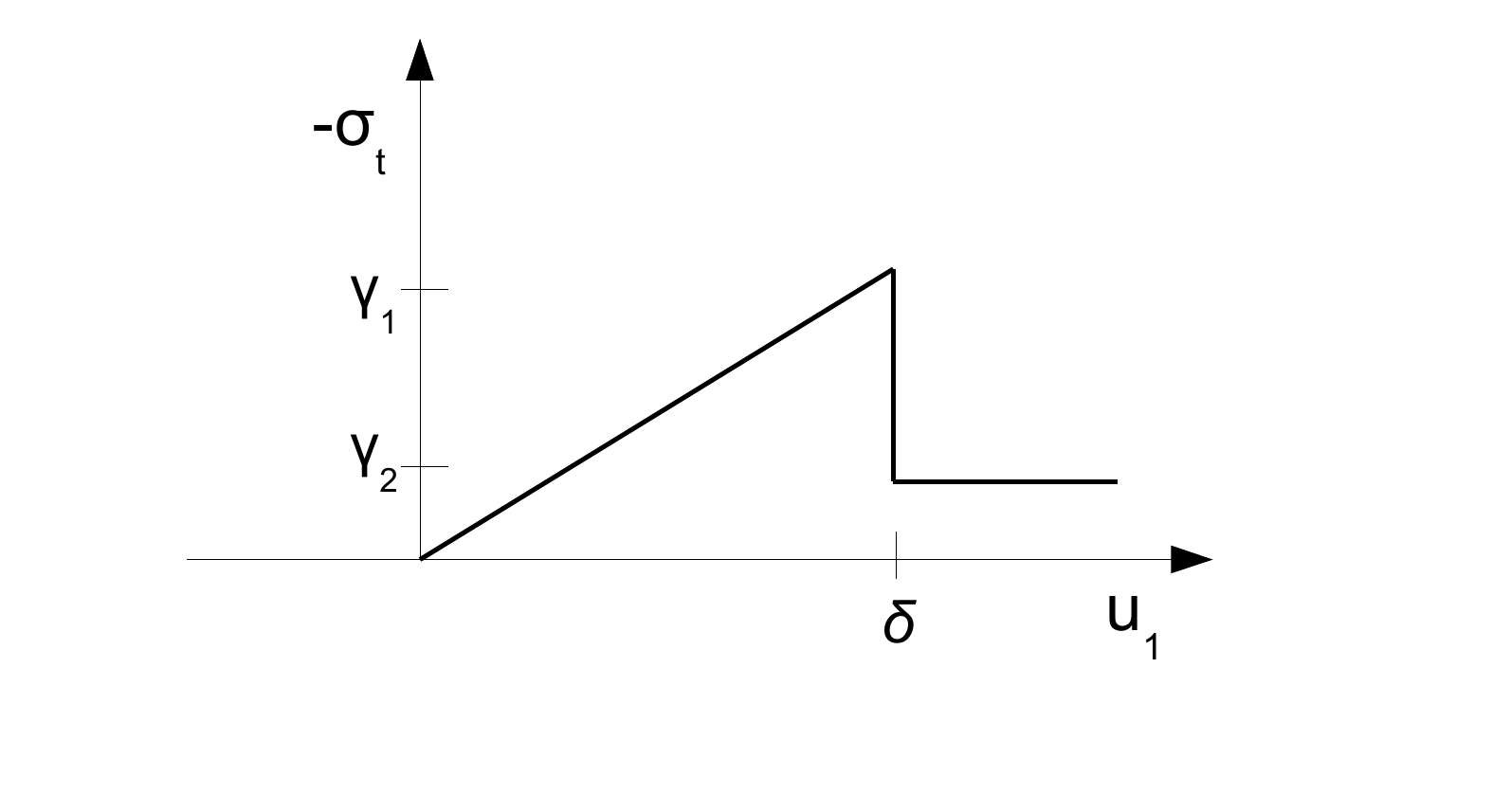}}
\end{center}
\vspace{-0.5cm}
\caption{\small A nonmonotone friction law}\label{fig:2}
\end{figure}
Let
$$
V=\{ \mathbf{v} \in H^1(\Omega; \R^2)\,: \, v_1=0  \, \; \mbox{on} \, \;  \Gamma_{3}\}
$$
and 
$$
K=\{\mathbf{v} \in V \, : \, v_2 \geq 0 \;\mbox{on} \; \, \Gamma_{c}\}
$$
be the convex set of all admissible displacements. The weak formulation of this  contact problem leads to
the following hemivariational inequality: ~Find  
 $\mathbf{u}\in V$ such that for all $\mathbf{v} \in V$
\begin{equation} \label{mech1}
a(\mathbf{u},\mathbf{v}-\mathbf{u}) + \displaystyle \int_{\Gamma_c}j^0(u_1(s);v_1(s)-u_1(s)) \, d s \geq \langle \mathbf{g}, \mathbf {v}- \mathbf {u}
\rangle. 
\end{equation}
Here, $a(\mathbf{u},\mathbf{v})$ is the energy bilinear form of linear elasticity 
%\begin{equation} \label{lel}
\[
a(\mathbf{u},\mathbf{v})=\displaystyle \int_{\Omega}
\sigma_{ij}(\mathbf{u})\varepsilon_{ij}(\mathbf{u})\, dx \quad \mathbf{u}, \mathbf{v} \in V
\]
%\end{equation}
with $\mathbf{\sigma}$, $\mathbf{\varepsilon}$ related by means of (\ref{tau})
and the linear form $\langle \mathbf{g}, \cdot \rangle$ defined by
\[
\langle \mathbf{g}, \mathbf{v} \rangle = \pm P \int _{\Gamma_{1}}v_1 \, ds -Q \int _{\Gamma_{2}}v_2 \, ds. 
\]
From Theorem \ref{th:1}, %with $\beta:=\gamma_20$ as specified in (\ref{min_function}),
 we obtain the existence of at least one solution provided that
\[
\pm P \int _{\Gamma_{1}} a_1-b x_2(s) \, ds -Q \int _{\Gamma_{2}}a_2+bx_1(s) \, ds < - \gamma_2 \, \mbox{meas}\, (\Gamma_c) ^{1/2} 
\]
for all $a_1, a_2, b$ $\in \R$ satisfying $\int_{\Gamma_3}a_1-bx_2(s)\, ds=0$ and $\int_{\Gamma_c}a_2+bx_1(s)\, ds \geq 0$. 

Note that now we use the decomposition of $\mathbf{u}$ into normal and tangental parts and therefore, $\gamma_0=1$ in (\ref{constant_c}).

%Note that now we use the tangential trace operator $\gamma_T \, :\, V\to H^{1/2}_T(\Gamma; \R^2)$ with
%\[
%H^{1/2}_T(\Gamma; \R^2)=\{v\in H^{1/2}(\Gamma;\R^2) : v_n=0\}
%\]
%and therefore, the constant $\gamma_0$ in (\ref{constant_c}) is equal to 1. %Moreover, $\beta>0$ is specified in (\ref{min_function}). 

\subsection{Regularization and discretization}
We  solve this problem numerically following the method presented in Section \ref{Sec:4} by first regularizing the  hemivariational inequality (\ref{mech1}) and then discretizing  the regularized problem by the finite element method. 
This procedure leads to a  smooth optimization problem that can  be finally solved by using global minimization algorithms like trust region methods. 

For this purpose we fix $\varepsilon$ and use $S :  \R \times \R_{++}\to \R$ defined by
\[
S(x, \varepsilon): = 
\left \{ \begin{array} {ll} g_1(x) \quad & \mbox{if} \, \;
    (i) \; \mbox{holds} 
     \\[0.1cm]
\frac{1}{2\varepsilon} [g(x) -g_1(x)]^2  + \frac{1}{2} (g(x) +
g_1(x)) + \frac{\varepsilon}{8} \quad & \mbox{if} \, \; (ii)  \; \mbox{holds}  
\\[0.1cm]
g_2(x) \quad & \mbox{if} \, \; (iii) \;  \mbox{holds},
\end{array}
\right.
\]
to approximate the maximum function $-j(x)=\max\{-\frac{\gamma_1}{2\delta} x^2, -\gamma_2 x\}$. The cases 
$(i)$,  $(ii)$,  $(iii)$ are defined below,  respectively, by
\begin {description}
\item (i) $g_2(x) -g_1(x) \leq - \frac{\varepsilon}{2} $ ~\\[0.1cm]
\item (ii) $- \frac{\varepsilon}{2} \leq
g_2(x) -g_1(x) \leq \frac{\varepsilon}{2}$  ~\\[0.1cm]
\item (iii) $g_2(x) -g_1(x) \geq \frac{\varepsilon}{2}$.
\end{description}
Let $\{\mathcal T_h\}$ be a regular triangulation of $\Omega$ and 
%$\Sigma_h$ be the set 
$\{x_i\}$ be the set of all vertices of the triangles of $\{ \mathcal {T}_h\}$. % and $\mathcal{P}_h^c$
%the set of all nodes on $ \overline{\Gamma}_c$, i.e., 
%$$
%\mathcal{P}_h^c=\{x_i \in \Sigma_h \, : \, x_i \in \overline{\Gamma}_c\}.
%$$
We use continuous piecewise linear functions to approximate the displacements. 
Thus,  $V$ and $K$ are approximated, respectively,  by
$$
V_h=\{v_h \in C(\overline{\Omega};\R^2)\, : \, {v_h}_{|_{T}} \in (\p_1)^2,
\; \; \forall T \in \mathcal T_h, \, \; v_{h1}(x_i) =0  \; \; \forall x_i \in \overline{\Gamma}_3 \}
$$
$$
K_h=\{v_h \in V_h \, : \,  v_{h2}(x_i) \geq 0  \; \; \forall x_i \in \overline{\Gamma}_c \}.
$$
The approximation of (\ref{mech1}) now reads as follows: ~Find $u_h\in V_h$ such that 
$$
a(u_h,v_h-u_h) + \langle - D J_h(u_h),v_h-u_h\rangle \geq P \int
_{\Gamma_F^1}(v_{h1}-u_{h1})\,d x_2 \quad  \forall v_h\in K_h,
$$
where 
%\begin{eqnarray*}
%$$
%\begin{equation*}
\begin{equation} \label{reg_dis}
\langle D J_h(u_h),v_h\rangle  =  \frac{1}{2} \sum |P_iP_{i+1}| \Big [ 
\frac{\partial S}{\partial x} (u_{h1}(P_i),\varepsilon) v_{h1}(P_i) + \frac{\partial S}{\partial x} (u_{h1}(P_{i+1}),\varepsilon) v_{h1}(P_{i+1})
\Big ]. 
\end{equation}
%\end{equation*}
%$$
%\end{eqnarray*}
The discretized regularized problem (\ref{reg_dis})  is put to work using the following steps. First, we use a condensation technique based on a Schur complement to reduce the total number of unknowns in (\ref{reg_dis}) and pass to a finite-dimensional variational inequality problem formulated only in terms of the contact displacements. The obtained problem is re-written as a mixed complementarity problem, which is further reformulated as a system of nonlinear equations by means of the Fischer-Burmeister function $f(a,b)=\sqrt{a^2+b^2}- (a+b)$. Finally, we apply an appropriate merit function and  obtain an equivalent smooth, unconstrained minimization problem, which is numerically solved by using {\it lsqnonlin} - MATLAB function based on trust region method. The maximal number of iteration in {\it lsqnonlin} has been fixed to $100$. The
regularization parameter $\varepsilon$ is set to $0.1$.
%We did numerical experiments 
The tangential component $u_1$ along $\Gamma_c$ for the two models (see Fig. \ref{fig:1}) and four different mesh sizes $h=1/4, 1/8, 1/16$ and  $1/32$ in ~[m] is captured in Fig. \ref{fig:4} (a), respectively,  Fig. \ref{fig:5} (a). The computed tangential stress $-\sigma_t$ along $\Gamma_c$ is shown in Fig. \ref{fig:4} (b), respectively, Fig. \ref{fig:5} (b). Fig. \ref{fig:3} illustrates the computed complete  displacement field on the whole boundary $\Gamma$ for same mesh sizes.  
%($h=1/4, 1/8, 1/16, 1/32$) and the obtained numerical results are shown in Fig. \ref{fig:3} below.
%The number of the contact nodes is $q=5,9,17$ and $33$, respectively. We compute the displacement field on the whole boundary and present the obtained in the  Fig. \ref{fig:3} below.
\begin{figure}[b]
    \subfigure[Reference and deformed configurations in cm corresponding to Fig. \ref{fig:1} (a)]{\includegraphics[trim = 0.5cm 8cm 0.5cm 9cm,clip,width=1.0\textwidth]{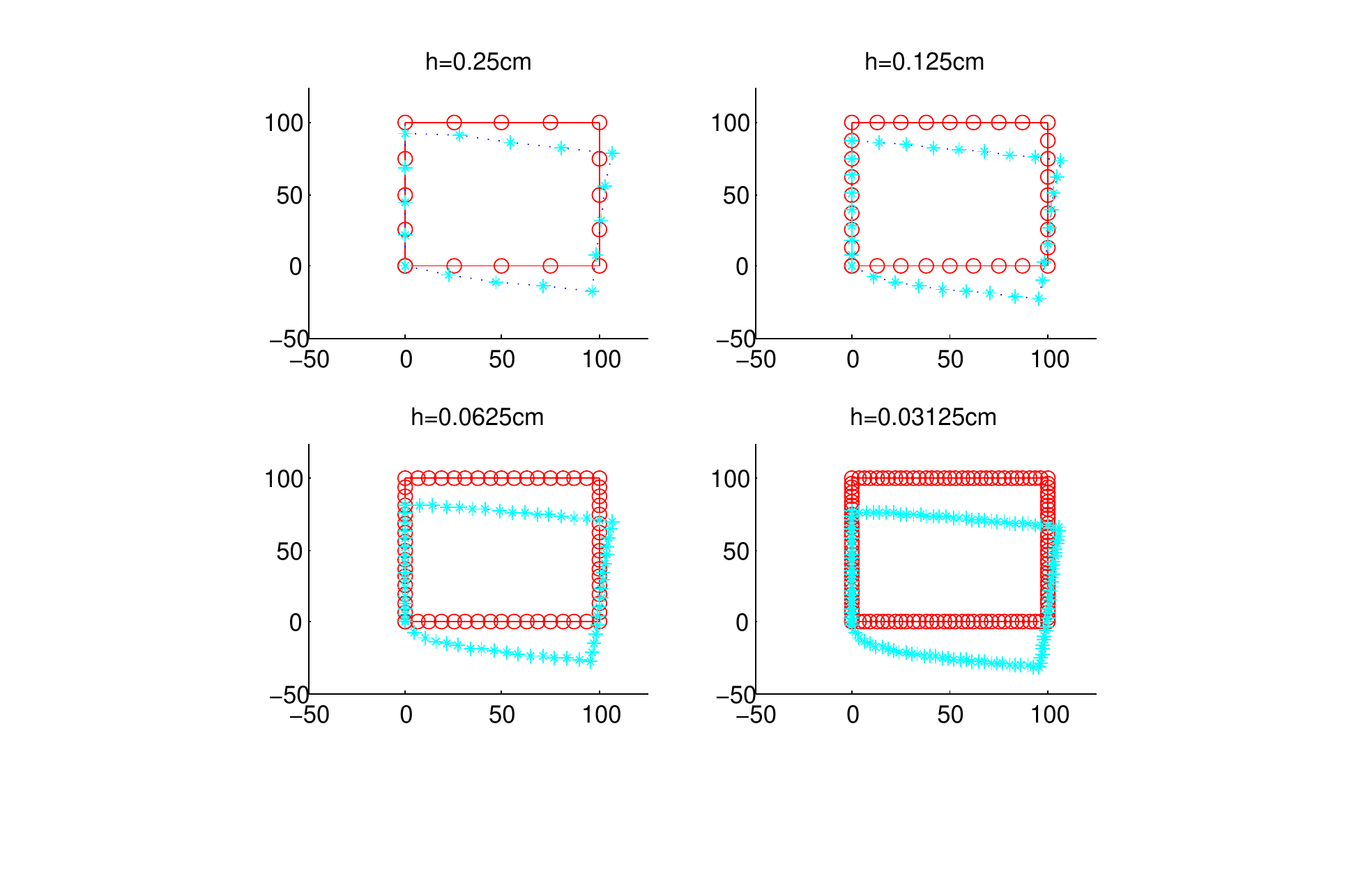}} 
    \subfigure[Reference and deformed configurations in cm corresponding to Fig. \ref{fig:1} (b)]{\includegraphics[trim = 0.5cm 8cm 0.5cm 9cm,clip,width=1.0\textwidth]{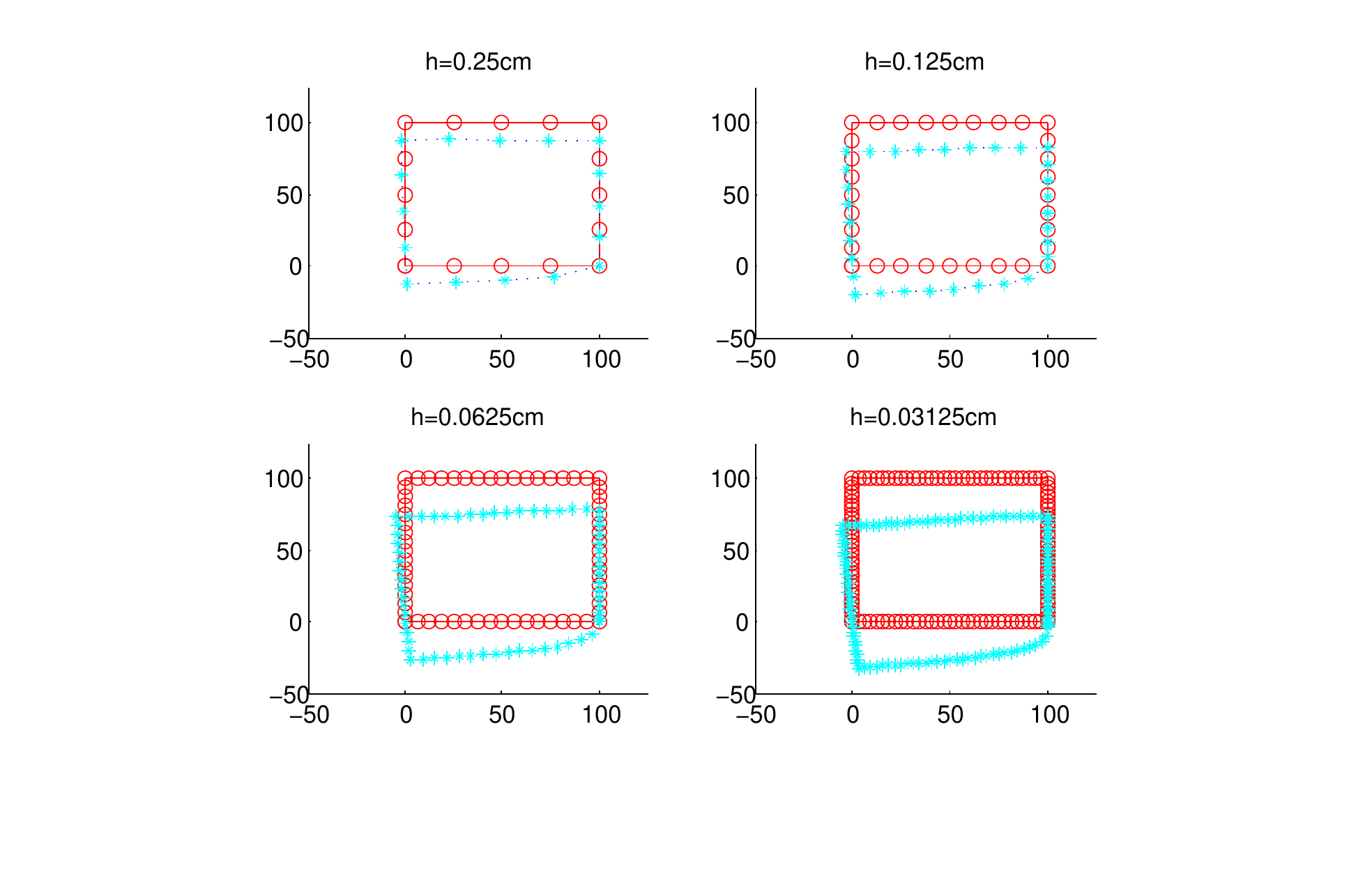}}
\caption{The complete displacement field on the whole boundary $\Gamma$} \label{fig:3}
\end{figure}
\begin{figure}[h!] 
%\hspace*{-.5cm}
\centering
\subfigure[ $u_1$ on $\Gamma_c$]
%\begin{minipage}{0.6\textwidth}
{\includegraphics[trim = 1cm 8cm 0.5cm 9cm,clip,width=0.49\textwidth]{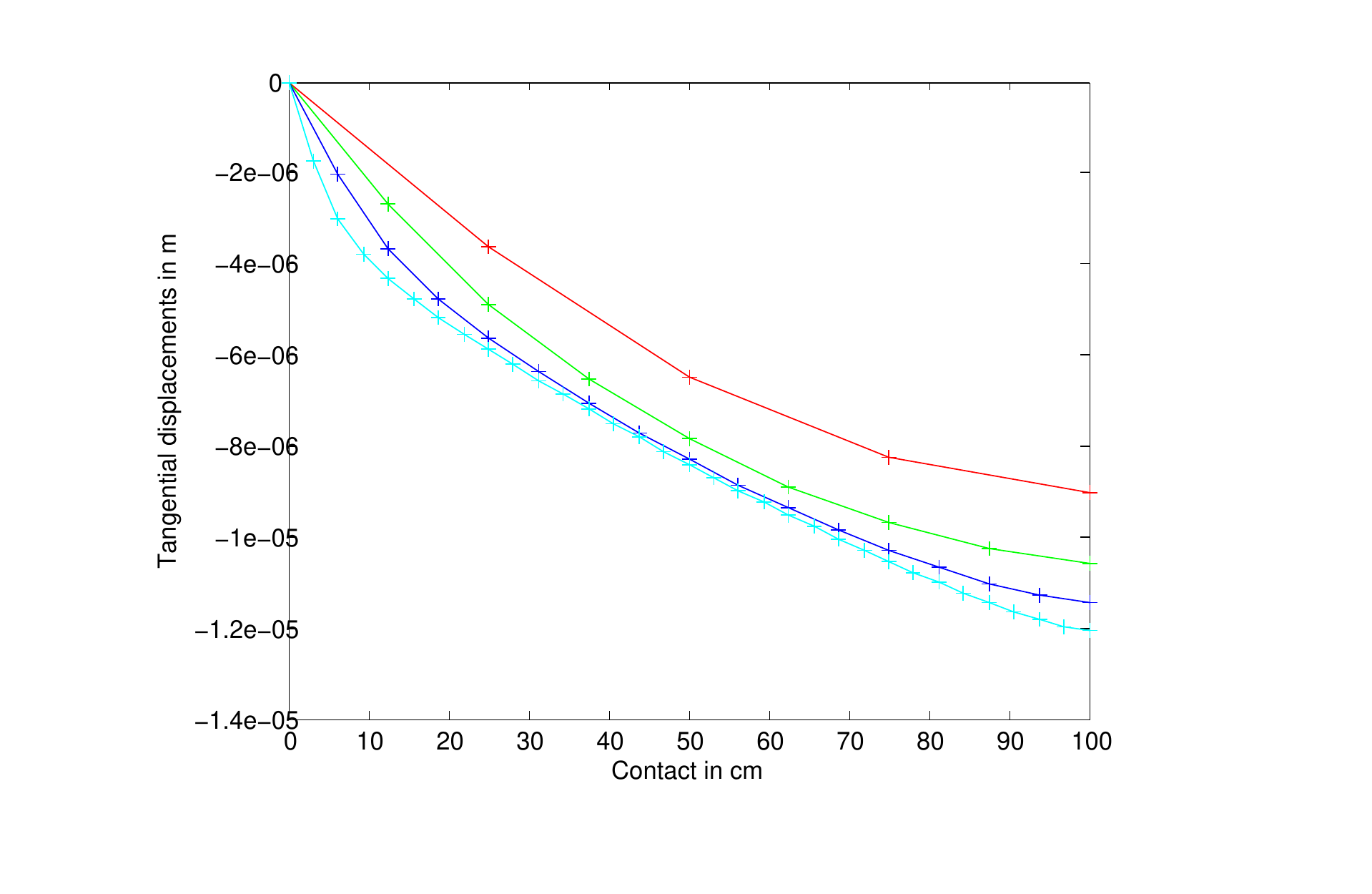}} 
%\end{subfigure}
\subfigure[$-\sigma_t$ on $\Gamma_c$]
{\includegraphics[trim = 1cm 8cm 0.5cm 9cm,clip,width=0.49\textwidth]{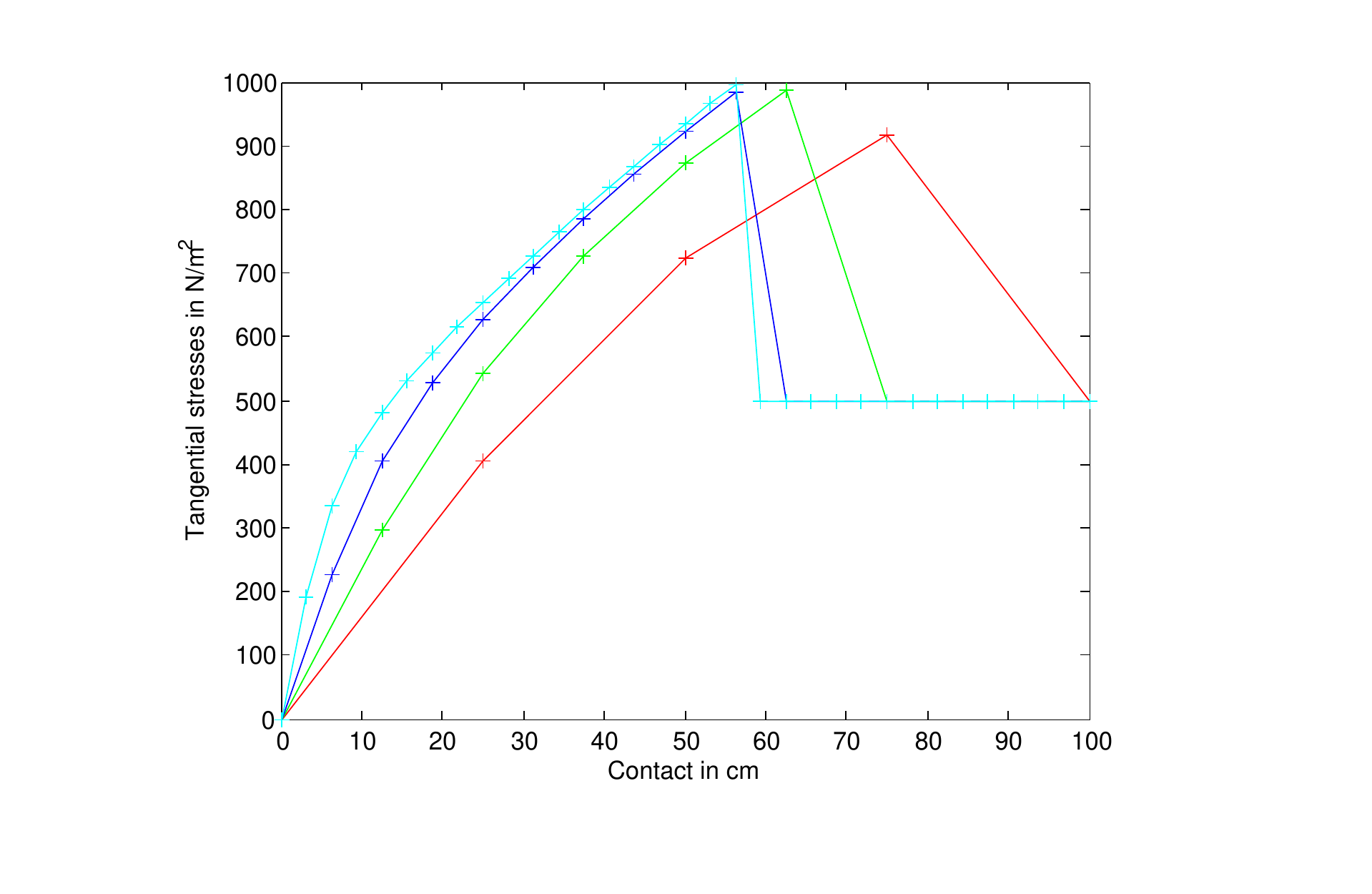}} 
\caption{%Numerical results corresponding to the first reference model  
{\it Wall left:} Left image shows the tangential component $u_1$  on $\Gamma_c$ for $4$ discretization parameters $h=1/4$ ({\it red}), $h=1/8$ ({\it green}), $h=1/16$ ~({\it dark blue}), $h=1/32$ ({\it light blue}) in [m]. The right image shows the distribution of the tangential stress $-\sigma_t$ along $\Gamma_c$ for the same $4$ scenarios}
\label{fig:4}
%\end{subfigure}
%\hfill
%\end{minipage}
%\begin{minipage}{0.6\textwidth}
\end{figure}

\begin{figure}[h!]
\centering
\subfigure[ $u_1$ on $\Gamma_c$]
{\includegraphics[trim = 1cm 8cm 0.5cm 9cm,clip,width=0.49\textwidth]{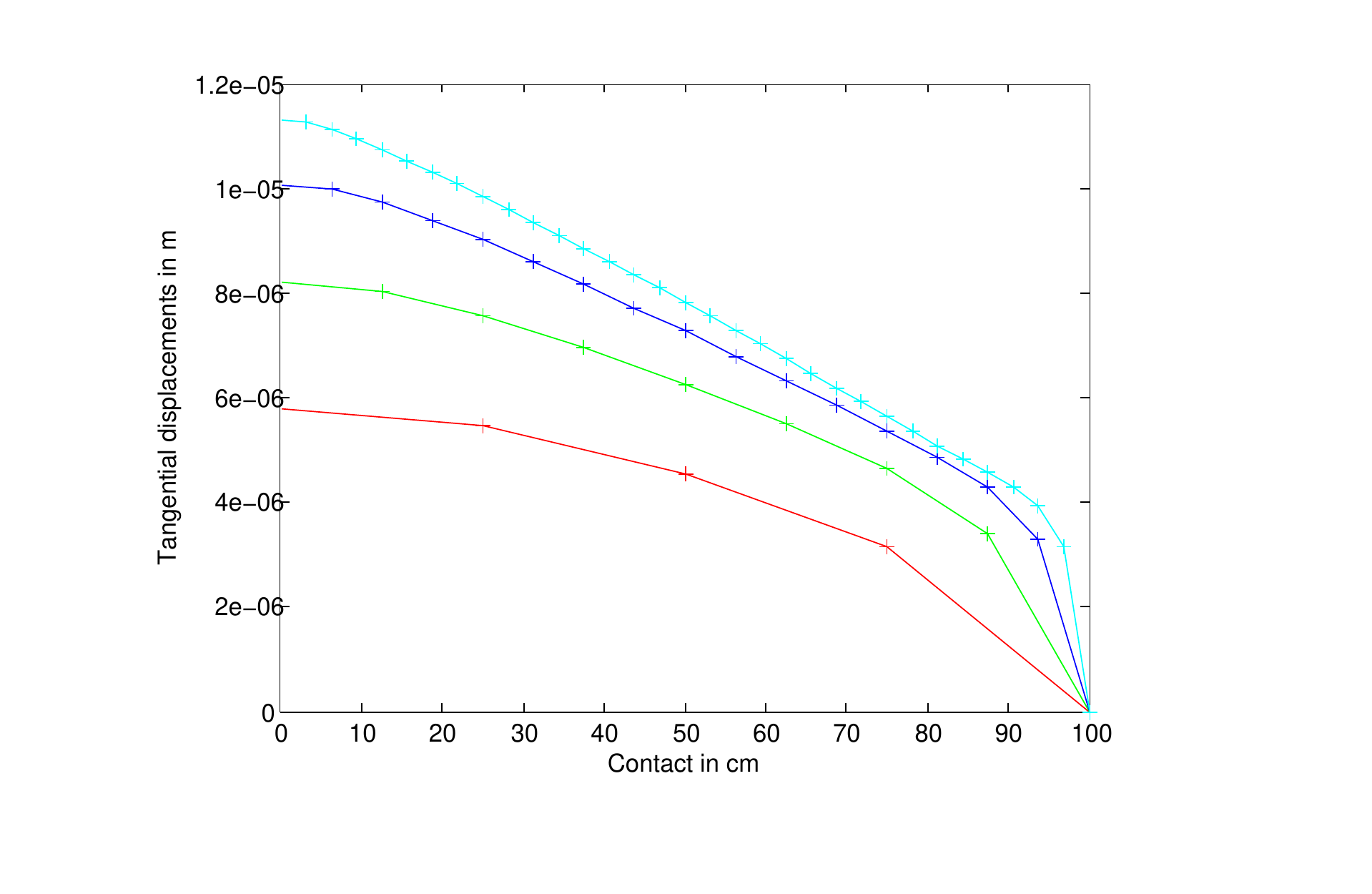}}
%\end{minipage}
%\begin{minipage}{0.5\textwidth}
%\end{minipage}
%\begin{minipage}{0.5\textwidth}
\subfigure[$-\sigma_t$ on $\Gamma_c$]
{\includegraphics[trim = 1cm 8cm 0.5cm 9cm,clip,width=0.49\textwidth]{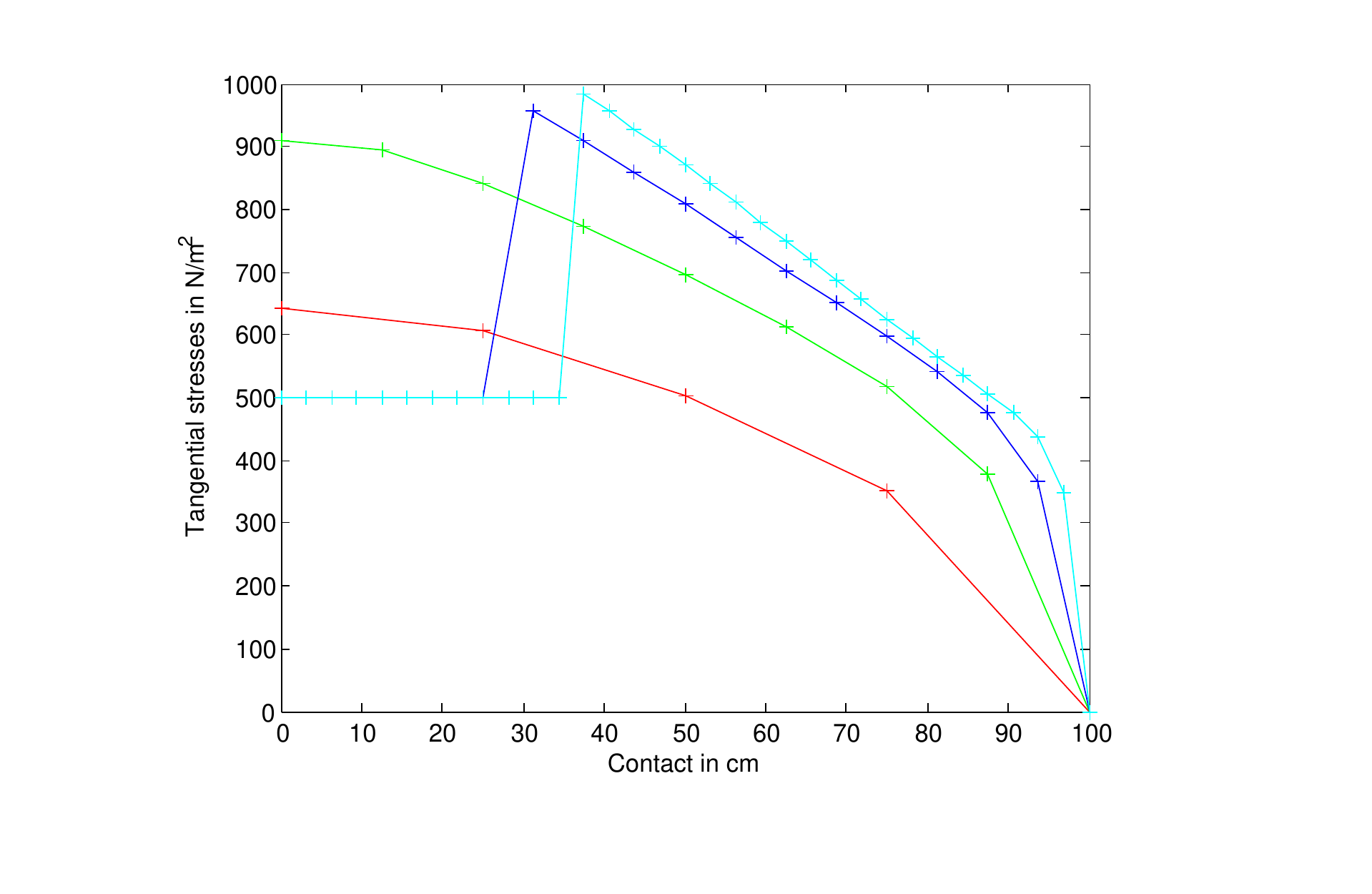}}
%\end{minipage}
\caption{%Numerical results corresponding to the second reference model  
{\it Wall right:}  Left image shows the tangential component $u_1$  on $\Gamma_c$ for $4$ discretization parameters $h=1/4$ ({\it red}), $h=1/8$ ({\it green}), $h=1/16$ ~({\it dark blue}), $h=1/32$ ({\it light blue}) in [m]. The right image shows the distribution of the tangential stress $-\sigma_t$ along $\Gamma_c$ for the same $4$ scenarios}
\label{fig:5}
 \end{figure} 
 
 %Figures \ref{fig:3} and \ref{fig:4} show the behaviour of the tangential displacements $u_1$ and  the distribution of  $-S_T$ along the contact boundary $\Gamma_c$ for the different number of the contact nodes ($q=9,17,33,65$) and for the constant load  $P=1.2\times 10^6N/m^2$.  
%From Fig. \ref{fig:4} we can see that the computed 
%stresses  indeed follow the law depicted in Fig. \ref{fig:2}.
%\begin{figure}[h] %0.6\textwidth
%\begin{center}
%\includegraphics[trim = 0.5cm 8cm 0.5cm 9cm,clip,width=1.0\textwidth]{displ_1208} 
%\end{center}
%\vspace{-1cm}
%\caption{\small The tangental component of the displacement vector on $\Gamma_c$}\label{fig:3}
%\end{figure}
%\begin{figure}[h] 
%\begin{center}
%\vspace{-2cm}
%\includegraphics[trim = 0.2cm 8cm 0.2cm 5cm,clip,width=1.0\textwidth]{reactions_1208} 
%\end{center}
%\vspace{-1cm}
%\caption{\small The distribution of the tangental stresses along $\Gamma_c$}\label{fig:4}
%\end{figure}
%It is easy to see that with a finer discretization (e.g., $q=17, 33, 65$) some of the compited displacements are larger than $\delta= 9.0 \times 10^{-6} \, m$  and the computed tangential stresses jump down to the parallel branch $-S_T=500 \, N/m^2$ as discribed by a  nonmonotone friction law in Fig. \ref{fig:2}.  
%\newpage
\begin {thebibliography}{99}

\bibitem {DemyanovRubinov}
Demyanov, V.F.; Rubinov, A.M.:
Constructive nonsmooth analysis. 
Approximation \& Optimization, 7.
Peter Lang, Frankfurt am Main, 1995.

\bibitem {DemyanovStavroulakis}
Demyanov, V.F. ; Stavroulakis, G.E.  ;
Polyakova, L.N. ; Panagiotopoulos, P.D.: 
Quasidifferentiability and nonsmooth modelling in mechanics, engineering and
economics. 
Nonconvex Optimization and its Applications, 10.
Kluwer Academic Publishers, Dordrecht, 1996.

\bibitem{Panagiotopoulos1993} Panagiotopoulos, P.D.: Hemivariational
    inequalities. Applications  in mechanics and engineering,  
Berlin, Springer, 1993.

\bibitem{Naniewicz} 
Naniewicz, Z.;  Panagiotopoulos, P. D.:
Mathematical Theory of Hemivariational Inequalities and Applications, 
Marcel Dekker, New York, 1995.

\bibitem{Panagiotopoulos1998} Panagiotopoulos, P.D.: 
 Inequality problems in mechanics and application.
 Convex  and nonconvex energy functions, Basel, Birkh\"{a}user, 1998.

\bibitem {Goeleven} Goeleven, D., Motreanu, D., Dumont, Y., Rochdi, M.: 
Variational and Hemivariational Inequalities: Theory, Methods and Applications,
Vol. II: Unilateral problems, Kluwer, Dordrecht, 2003. 

\bibitem{Baniotopoulos}
 Baniotopoulos, C.C.; Haslinger, J.;  Mor\'{a}vkov\'{a}, Z.: 
Contact problems with nonmonotone friction: discretization and numerical realization, Comput. Mech. Vol. 40, 157-165 (2007)

\bibitem {AttouchButtazzo}
Attouch, H.; Buttazzo, G.; Michaille, G.:
{V}ariational {A}nalysis in {S}obolev and {BV} {S}paces,
MPS-SIAM Series on Optimization.
SIAM, Philadelphia, 2006.

\bibitem{Penot} 
Penot, J.-P.: Npncoercive problems and asymptotic conditions,
Asymptot. Anal. 49 (2006), 205 -- 215.

\bibitem{Brezis} 
Br\'{e}zis, H.: Equations et inequations non
    lin\'{e}aires dans les espaces vectoriels en dualit\'{e},
Ann. Inst. Fourier 18 (1968), 115 -- 175.    

\bibitem{Pana91}
Panagiotopoulos, P. D.:
Coercive and semicoercive hemivariational inequalities.
Nonlinear Anal. 16 (1991), no. 3, 209 -- 231.

\bibitem{GoelThera95}
Goeleven, D. ; Th\'{e}ra, M.: 
Semicoercive variational hemivariational inequalities. 
J. Global Optim. 6 (1995), no. 4, 367 -- 381.

\bibitem{AdlyGoelThera96}
Adly, S. ; Goeleven, D. ; Th{\'e}ra, M.:
 Recession mappings and noncoercive variational inequalities,
  Nonlinear Anal. Series A, 26 (1996), no. 9, 1573--1603.

\bibitem{Goel-96}
Goeleven, D.:
 On Noncoercive Variational Problems and Related Results, 
Pitman Research Notes in Mathematics Series 357, Longman, Harlow, 1996.

\bibitem{DincaPana97}
Dinca, G.; Panagiotopoulos, P.D.; Pop, G.:
An existence result on noncoercive hemivariational inequalities. 
Ann. Fac. Sci. Toulouse Math. (6) 6 (1997), no. 4, 609 -- 632. 

\bibitem{Gwi-97} Gwinner, J.:  
A note on pseudomonotone functions, regularization, and relaxed coerciveness,
Nonlinear Anal. 30 (1997), 4217 -- 4227.

\bibitem{Chadli-98}
Chadli, O. ; Chbani, Z.  ; Riahi, H.: 
Some existence results for coercive and 
noncoercive hemivariational inequalities,
Appl. Anal. 69 (1998), no. 1-2, 125 -- 131. 

\bibitem{Chadli-99}
Chadli, O.  ; Chbani, Z. ; Riahi, H.: 
Recession methods for equilibrium problems and applications to variational and
hemivariational inequalities, 
Discrete Contin. Dynam. Systems 5 (1999), no. 1, 185 -- 196.

\bibitem{Nani-00}
Naniewicz, Z.: 
Semicoercive variational-hemivariational inequalities with unilateral growth
conditions, 
J. Global Optim. 17 (2000), no. 1-4, 317 -- 337.

\bibitem{Adly-02}
Adly, S.; Th{\'e}ra, M.; Ernst, E.:
Stability of the solution set of non-coercive variational inequalities,
  Commun. Contemp. Math. 4 (2002), no. 1, 145--160.

\bibitem{Liu-03}
Liu, Zhen-Hai:
Elliptic variational hemivariational inequalities,
Appl. Math. Lett. 16 (2003), no. 6, 871--876.

\bibitem{Chadli-04}
Chadli, O.; Schaible, S.  ; Yao, J. C.: 
Regularized equilibrium problems with application to noncoercive
hemivariational inequalities, 
J. Optim. Theory Appl. 121 (2004), no. 3, 571 -- 596.

\bibitem{Adly-06}
Adly, S.: 
Stability of linear semi-coercive variational inequalities in Hilbert spaces:
application to the Signorini-Fichera problem,
J. Nonlinear Convex Anal. 7 (2006), no. 3, 325 -- 334.

%\bibitem{AdlyErnst-06}
%Adly, Samir; Ernst, Emil; Théra, Michel:
%Stability in frictional unilateral elasticity revisited: an application of the theory of %semi-coercive variational inequalities, Mathematical analysis and applications, 1–11,
%AIP Conf. Proc., 835, Amer. Inst. Phys., Melville, NY, 2006. 

\bibitem{Chadli-07}
Chadli, O.; Liu, Z. ; Yao, J. C.: 
Applications of equilibrium problems to a class of noncoercive variational inequalities,
J. Optim. Theory Appl. 132 (2007), no. 1, 89 -- 110.

\bibitem{Costea-10}
Costea, N.; Radulescu, V. D.:
Hartman-Stampacchia results for stably pseudomonotone operators and
non-linear hemivariational inequalities. 
Appl. Anal. 89 (2010), no. 2, 175 -- 188.

\bibitem{Costea-12}
Costea, N.; Matei, A.:
Contact models leading to variational-hemivariational inequalities. 
J. Math. Anal. Appl. 386 (2012), no. 2, 647 -- 660.

%bibitem{Ochal} Mig\'{o}rski, S., Ochal, A., Sofonea, M.: Nonlinear Inclusions and Hemivariational Inequalities: Models and Analysis of Contact Problems, Springer (2013)

\bibitem{Tang}
Tang, Guo-ji ; Huang, Nan-jing: 
Existence theorems of the variational-hemivariational inequalities,
J. Global Optim. 56 (2013), no. 2, 605 -- 622.

\bibitem{Chadli-14}
Lahmdani, A.; Chadli, O.; Yao, J. C.: 
Existence of solutions for noncoercive hemivariational inequalities by an
equilibrium approach under pseudomonotone perturbation. 
J. Optim. Theory Appl. 160 (2014), no. 1, 49 -- 66.

%%%%%%%%%%%% numerical solution: %%%%%

\bibitem{Haslinger}  Haslinger, J.,  Miettinen, M.,  Panagiotopoulos, P.D.: 
  Finite Element Methods for Hemivariational Inequalities, 
Kluwer Academic Publishers, Dordrecht, 1999.

\bibitem{Hinter} Hinterm\"{u}ller, M.;  Kovtunenko, V.A.; Kunisch K.: 
Obstacle Problems with Cohesion: 
A Hemivariational Inequality Approach and Its Efficient Numerical Solution, 
SIAM J. Optim. 21 (2011), no. 2, 491 - 516.

%%%%%%% only convex -monotone:

\bibitem{Kaplan}
Kaplan, A. ; Tichatschke, R.:
Stable Methods for Ill-posed Variational Problems: Prox-Regularization of
Elliptic Variational Inequalities and Semi-infinite Problems, 
Akademie-Verlag, Berlin, 1994.

%%%%%%% papers, only convex -monotone:

\bibitem{Gwi-91} Gwinner, J.:
Discretization of semicoercive variational inequalities,
Aequationes Math. 42 (1991), no. 1, 72 -- 79.  

\bibitem{Gwi-94} Gwinner, J.: A Discretization Theory for Monotone Semicoercive Problems and Finite Element Convergence for p-Harmonic Signorini Problems, 
ZAMM - J. of Applied Mathematics and Mechanics 74 (1984) , no. 9, 417 -- 427. 

\bibitem{Spann}
Spann, W::
Error estimates for the approximation of semicoercive variational inequalities. 
Numer. Math. 69 (1994), no. 1, 103 -- 116. 

\bibitem{Dostal_98}
Dost{\'a}l, Z. ; Friedlander, A. ; Santos, S.A.:
Solution of coercive and semicoercive contact problems
 by {FETI} domain decomposition,
Domain decomposition methods, 10 ({B}oulder, {CO}, 1997),
Contemp. Math. 218, 82--93, Amer. Math. Soc., Providence, 1998.

\bibitem{AdlyGoel}
Adly, S. ; Goeleven, D.:
A discretization theory for a class of semi-coercive unilateral problems,
Numer. Math. 87 (2000), no. 1, 1--34.

\bibitem{Dostal_03}
Dost{\'a}l, Z. ; Hor{\'a}k, D.:
Scalable {FETI} with optimal dual penalty for semicoercive variational inequalities,
Current trends in scientific computing ({X}i'an, 2002),
Contemp. Math. 329, 79--88, Amer. Math. Soc., Providence, 2003.

\bibitem{Namm} 
Namm, R.V.; Woo, Gyungsoo  ; Xie, Shu-Sen; Yi, Sucheol:
Solution of semicoercive {S}ignorini problem based on a
duality scheme with modified {L}agrangian functional,
J. Korean Math. Soc. 49 (2012), no. 4, 843--854.
%%%%%%%%%%%%%%%%%%%%%%%%%%%%%%%%%%%%%%%%%%%%%%%%%%%%%%%%55

\bibitem{Gwi_Ov2015} 
Gwinner, J.; Ovcharova, N.:
From solvability and approximation of variational inequalities 
to solution of nondifferentiable optimization problems in contact mechanics, 
Optimization 64 (2015), no. 8, 1683 -- 1702. 

\bibitem{Ovcharova} Ovcharova, N.: Regularization Methods and Finite Element 
Approximation of Hemivariational Inequalities with Applications to 
Nonmonotone Contact Problems, PhD Thesis, 
Universit\"{a}t der Bundeswehr M\"{u}nchen, 
Cuvillier Verlag, G\"{o}ttingen, 2012.

\bibitem{Ov_Gwin2014} Ovcharova, N., Gwinner J.: 
A study of regularization techniques of nondifferentiable optimization 
in view of application to hemivariational inequalities, 
J. Optim. Theory Appl. 162 (2014), no. 3, 754-778.

\bibitem{Adly-04}
Adly, Samir; Th{\'e}ra, Michel; Ernst, Emil:
Well-positioned closed convex sets and well-positioned closed convex functions,
J. Global Optim. 29 (2004), no. 4,  337--351.

\bibitem{Glowinski}
 Glowinski, R: 
 Numerical Methods for Nonlinear Variational Problems,
 Reprint of the 1984 original,
 Springer-Verlag, Berlin, 2008. 

%%%%%%%%%%% end of introduction / section 2

 \bibitem{Gwinner_PhD}  Gwinner, J. :  
    Nichtlineare Variationsungleichungen mit Anwendungen, PhD Thesis,
    Universit\"{a}t Mannheim, Haag + Herchen Verlag, Frankfurt, 1978.

 \bibitem{Jeggle} Jeggle, H.: Nichtlineare Funktionalanalysis, 
Teubner Stuttgart, 1979.

\bibitem{GoelGwin}    
Goeleven, D.; Gwinner, J.:
    On semicoerciveness, a class of variational inequalities, and
              an application to von {K}\'arm\'an plates,
   Math. Nachr. 244 (2002), 89--109.

%%%%%%%%%%
	
%\bibitem{Rockafellar} Rockafellar, R. T.: Convex Analysis, Reprint of the 1970 original,
 %             Princeton University Press, Princeton, 1997.

%%%%%%% Proof of Theorem 1 : %%%%%%%%%%%    
 \bibitem{Stamp} Stampacchia, G.: Variational inequalities. In: Theory and Applications of Monotone Operators, Edizione Oderisi, Gubbio, 101-192 (1969)

\bibitem{Hess} Hess, P.: On semicoercive nonlinear problems, 
Indiana Univ. Math. J. 23 (1974), 645-654.

\bibitem{Schatzman} Schatzman, M.: 
  Probl\`emes aux limites non lin\'eaires, non coercifs,
   Ann. Scuola Norm. Sup. Pisa (3) 27 (1974), 641--686.

%\bibitem{Ov_Gwin2013} Ovcharova, N., Gwinner J.: On the regularization method in &nondifferentiable optimization applied to hemivariational inequalities, Constructive Nonsmooth Analysis and Related Topics (2013), Springer, pp. 59-70.

%%% section 4: approx semicoerc HVI %%%%

\bibitem{Clarke}  Clarke, F:    Optimization and Nonsmooth Analysis, 
John Wiley, New York, 1983.

%%%%%%%%%%%%%%%%%%%%%%%%%%%%%%%%%%%%%%%%%%%% only coercive:

\bibitem{Gwi-13} Gwinner, J.: 
 $hp$-FEM convergence for unilateral contact problems with Tresca friction in plane linear elastostatics, J. Comput. Appl. Math. 254 (2013), 175 -- 184.

\bibitem {Facchinei} Facchinei, F.,  Pang, J-S.: 
    Finite-Dimensional Variational Inequalities and Complementarity Problems,
    Vol. I, Vol. II, Springer, New York, 2003.

 \end {thebibliography}

\end{document}